\documentclass[11pt,a4paper]{article}
\usepackage[margin=2.2cm]{geometry}

\usepackage{srcltx,graphicx}
\usepackage{amsmath, amssymb, amsthm}
\usepackage{bm}
\usepackage{color, xcolor}
\usepackage{array}
\usepackage{subfig}
\usepackage{lscape}
\usepackage{algorithm}      
\usepackage[noend]{algpseudocode}
\usepackage{multirow}
\usepackage{psfrag}
\usepackage{pifont}
\usepackage{bbding}
\usepackage{comment}
\usepackage{booktabs}
\usepackage{hyperref}
\hypersetup{
    colorlinks,
    linkcolor = {blue!50!black},
    citecolor = {blue!50!black},
    urlcolor  = {blue!80!black}
}
\usepackage{nccmath}
\usepackage{authblk}
\usepackage{graphicx}
\newtheorem{remark}{Remark}
\newtheorem{theorem}{Theorem}
\newtheorem{lemma}{Lemma}

\usepackage[numbers]{natbib}

\usepackage{mathptmx}

\begin{document}
\newpage
\title{Asymptotic Analysis of Shallow Water Moment Equations}
\author[1]{Mieke Daemen}
\author[1]{Julio Careaga}
\author[2]{Zhenning Cai}
\author[1,3]{Julian Koellermeier}

\affil[1]{
    Bernoulli Institute, University of Groningen,
    Nijenborgh 9, Groningen, 9747 AG, The Netherlands
}

\affil[2]{
    Department of Mathematics, National University of Singapore,
    10 Lower Kent Ridge Road, Singapore 119076, Singapore
}

\affil[3]{
    Department of Mathematics, Computer Science and Statistics, Ghent University,
    Krijgslaan 281-S9, Ghent, 9000, Belgium
}
\date{}

\maketitle


\begin{abstract}
The Shallow Water Moment Equations (SWME) are an extension of the Shallow Water Equations (SWE) for improved modelling of free-surface flows. In contrast to the SWE, the SWME incorporate vertical velocity profile information. The SWME framework approximates vertical velocity profiles using a polynomial expansion with Legendre polynomials and polynomial coefficients, also called moment variables.
The SWME have an increased number of variables that must always be incorporated, even when the flow approaches a viscous slip equilibrium state that could be characterised by vanishing moment variables. To reduce the complexity of the SWME in cases proximate to this equilibrium, we conduct an asymptotic analysis of the SWME. This yields the closed form Reduced Shallow Water Moment Equations (RSWME) for deviations from the equilibrium. The RSWME have fewer variables, compared to the SWME. The hyperbolicity of the RSWME is analysed. Numerical tests include a wave with a sharp height gradient, a smoother height gradient and a square root velocity profile. The numerical tests demonstrate that the RSWME reduce computational cost up to 77\% compared to the SWME and improves accuracy up to 88\% over the SWE.
\end{abstract}

\vspace{5mm}
\textbf{Keywords:} shallow free-surface flows, Chapman Enskog expansion, shallow water moment equations
\vspace{2mm}

\textbf{MSC 2020:} 35L60, 76D05, 35C20
\vspace{5mm}

\section{Introduction}\label{section:Introduction}
The Shallow Water Equations (SWE) are widely applied to model free-surface flows in contexts such as ocean modelling \cite{Miller_2007} and snow avalanche simulation \cite{Sanz-Ramos_Blade_Oller_Furdada_2023}. These equations result from depth-averaging the Navier–Stokes equations, assuming a simplified vertical velocity profile represented by the depth-averaged velocity \cite{Vreugdenhil1994}. A significant limitation of this approach is the inability to model the detailed evolution of vertical structure, which can lead to modelling errors in scenarios with non-uniform vertical velocity profiles such as tsunami propagation \cite{Arcas2011} or dam-break flows \cite{Liang2010}. Furthermore, in the presence of bottom friction, vertical velocity profiles become non-uniform. As a result, applying the SWE introduces errors, especially in sediment transport scenarios \cite{JosGarresDaz2021}. Hence, the SWE are insufficient for modelling non-uniform vertical velocity profiles. As a possible remedy, the Shallow Water Moment Equations (SWME) have been derived to approximate vertical velocity profiles within a model for shallow flows \cite{Kowalski}.

Within the SWME framework, vertical velocity profiles are approximated by expanding them in Legendre polynomials, with the expansion coefficients referred to as moment variables.
In \cite{Kowalski}, the SWME are derived from the incompressible hydrostatic Navier-Stokes equations. The derivation employs the method of moments, where the horizontal velocity is expanded in vertical direction using a Legendre polynomial basis, and a Galerkin projection is then applied to derive the evolution equations for the moment coefficients. Subsequently, the moment model concept was also applied to non-hydrostatic cases \cite{Scholz2023}, non-Newtonian cases with sediment transport \cite{JosGarresDaz2021}, and two-dimensional radially symmetric free-surface flows \cite{Verbiest2025}. Additionally, as discussed in \cite{Bauerle2025}, the two-dimensional SWME are analysed, focussing on their mathematical structure. In \cite{Kowalski, Scholz2023, Verbiest2025}, it was shown that increasing the number of moments yields solutions that more closely approximate reference solutions computed via a direct discretisation of the Navier-Stokes equations, particularly when compared to the SWE. Thus, the SWME offer a clear advantage in enhancing the accuracy of free-surface flow models. 

In \cite{JulianKoellermeier2020}, it has been demonstrated that for specific cases, the eigenvalues of the transport matrix exhibit non-zero imaginary components. Consequently, the propagation speeds in the SWME system are no longer real and instabilities can occur. To address this loss of hyperbolicity, the SWME transport matrix is linearised around a linear velocity profile resulting in the Hyperbolic Shallow Water Moment Equations (HSWME) \cite{JulianKoellermeier2020}. Additionally, the Shallow Water Linearised Moment Equations (SWLME) provide an alternative hyperbolic model by omitting non-linear terms in the SWME-system \cite{Koellermeier2022}.

The SWME, HSWME, and SWLME incorporate more variables than the SWE, making these moment models computationally more expensive. This increase arises because the SWME, HSWME, and SWMLE each incorporate additional moment variables, rather than only fluid height and averaged velocity variables. To mitigate this increase in computational expense, we aim to develop a method to reduce the number of variables in the SWME. To achieve this reduction, the friction term must be considered, as it determines which model variables can be simplified through an appropriate closure.

We must therefore consider not only the transport component of the SWME but also the source term. The model in \cite{Kowalski} considers the simple case of a Newtonian friction term with viscosity and slip length parameters. In the SWME, this source-term accounts for both viscous dissipation inside the water and boundary effects at the fluid-solid interface.

In \cite{Huang2022}, the equilibria and stability of the hyperbolic HSWME and SWLME models are analysed, focussing on the influence of the slip length parameter in the source term. These models exhibit three distinct equilibrium manifolds, each characterised by specific source-term behaviour: (1) a water-at-rest state for finite viscosity and slip length, (2) a constant-velocity state when the slip length significantly exceeds the fluid height, and (3) a bottom-at-rest state when the slip length is much smaller than the fluid height. Through structural stability analysis and numerical simulations in \cite{Huang2022}, the water-at-rest and constant-velocity equilibria were shown to be stable, whereas the bottom-at-rest equilibrium could exhibit instability.

 The constant-velocity equilibrium state of the original SWME system from \cite{Kowalski} can be described without the additional polynomial coefficients that the SWME are solving for. When simulating perturbations from this equilibrium, the full SWME model incurs potentially high computational costs to resolve the small vertical variations in the form of the polynomial coefficients. On the other hand, a computationally faster SWE model fails to capture these variations because it relies solely on average velocity and fluid height. To balance runtime and accuracy, we aim to derive efficient approximate solutions near equilibria, so that solving a potentially large system for the polynomial coefficients is not necessary. These approximations emerge from systematically applying asymptotic analysis to the SWME for various scalings of the viscosity and the slip length.

For the asymptotic analysis a technique similar to Chapman Enskog expansion in kinetic theory from \cite{chapman1990mathematical} will be applied to the SWME with scalings of the slip length and viscosity friction parameters. For similar approaches applied to the Boltzmann Equations with various scalings of the non-dimensional Knudsen number, we refer to \cite{Bobylev2005, kremer2006methods, struchtrup2005macroscopic}. 

Applying asymptotic expansions and deriving explicit closure relations for the higher-order terms reduces the numbers of variables in the SWME, yielding the Reduced Shallow Water Moment Equations (RSWME). This derivation process consists of substituting asymptotic expansions into the governing equations and performing an order-of-magnitude analysis to isolate the dominant terms. Deriving analytical expressions for a subset of the variables in terms of the remaining model variables and substituting them back into the SWME ultimately yields a closed system. We derive the RSWME up to an arbitrary order of moments. We show that the closed RSWME system is identical for large numbers of moments. Thus, numerical solutions for varying numbers of moments can be obtained by solving the same system of partial differential equations, significantly reducing computational cost to simulate free-surface flows near equilibrium with varying numbers of moments. 

Additionally, for an arbitrary number of moments the hyperbolicity of the RSWME is analysed, and a hyperbolic regularisation procedure is developed. Numerical tests show that the new RSWME lead to a higher accuracy compared to the standard SWE, and a higher computational efficiency than the full SWME for flows close to equilibrium.

The rest of this paper is structured as follows: Section 2 presents the SWME system and derives asymptotic approximations for varying scalings of the viscosity and slip length parameters. In Section 3, the Chapman–Enskog expansion method is applied to a viscous slip scenario, resulting in the RSWME through the dimensional reduction of the SWME system. Subsequently, a hyperbolicity analysis of the RSWME is performed. Section 4 then presents numerical tests for smooth wave scenarios. Finally, Section 5 concludes the paper, summarising the key findings.

\section{Shallow Water Moment Equations}
This section briefly presents the derivation of the SWE and SWME models based on \cite{Kowalski}. The SWE and SWME systems are derived from the incompressible Navier-Stokes Equations (NSE).

\subsection{Shallow water model}
The Shallow Water Equations (SWE) are a set of hyperbolic partial differential equations that describe the behaviour of free-surface flows where horizontal length-scales significantly exceed vertical ones. The SWE are derived by depth-integrating the NSE. 

The classical SWE for a water column of fluid height $h$ and depth-averaged velocity $u_m$ in the horizontal direction $x$ are
\begin{align}
\begin{aligned}
    \partial_th +\partial_x(h u_m) &=0,\\
    \partial_t(hu_m)+\partial_x\left(hu_m^2+\frac{gh^2}{2}\right)+gh\partial_x(h_b) &=-\frac{\nu}{\lambda}u_m,
\end{aligned} \label{eq:SWEconv}
\end{align}
where $\nu$ and $\lambda$ denote viscosity and slip length, respectively. The bottom height is given by $h_b$. The right-hand side term of \eqref{eq:SWEconv} represents the bottom slip penalty.

The SWE from \eqref{eq:SWEconv} provide a computationally efficient two-variable system for flow modelling through depth-averaging. However, depth-averaging leads to errors in cases where significant horizontal velocity variations occur \cite{Kowalski}.

\subsection{Deriving SWME from the Navier–Stokes equations}

This subsection presents the derivation of the SWME, as introduced in \cite{Kowalski}, where vertical velocity profiles are approximated using a polynomial expansion to account for vertical flow structure.

To obtain the SWME system, we transform the NSE through the following two steps: (1) mapping the vertical coordinate \( z \), and (2) applying a polynomial Legendre expansion to velocity variables.

\paragraph{Vertical coordinate mapping} As presented in \cite{Kowalski}, a coordinate mapping is employed to enable a more efficient representation of horizontal velocity variations along the vertical axis. This mapping normalises the vertical coordinate \( z \), between the bottom fluid height \( z = h_b(x,t) \) and the free-surface fluid height \( z = h_s(x,t) \) as 
\begin{align}
    \zeta(x,t,z) = \frac{z - h_b(x,t)}{h(x,t)}, \label{eq:mapping}
\end{align}
where $h(x,t)=h_b(x,t)-h_s(x,t)$ is the fluid height.
The mapped vertical coordinate $\zeta $ from \eqref{eq:mapping} is defined within the interval $[0,1]$. As a result, the $z$-coordinate, $z\in [h_b(x,t),h_s(x,t)]$, is mapped to a normalised domain \( \zeta(x,t) \colon\text{ }  \mathbb{R}\times [0,T] \to [0,1] \).

\paragraph{Polynomial expansion} In the second step, \cite{Kowalski} expands the horizontal velocity variable \( u(x,t,\zeta) \) in the vertical direction $\zeta$ using a polynomial expansion centred around the depth-averaged velocity \( u_m(x,t) \). The velocity \( u(x,t,\zeta) \colon\text{ } \mathbb{R} \times [0,T] \times [0,1] \to \mathbb{R} \) is expressed as
\begin{align}
    u(x,t,\zeta) = u_m(x,t) + \sum_{j=1}^N \alpha_j(x,t) \phi_j(\zeta), \label{eq:velocity_expansion}
\end{align}
where \( \alpha_j(x,t) \colon \text{ }\mathbb{R} \times [0,T] \to \mathbb{R} \) represents the expansion coefficients called moments, and \( \phi_j \colon \text{ } [0,1] \to \mathbb{R}\) denotes the scaled Legendre polynomial of degree \( j \) for \( j = 0, \ldots, N \). These basis functions are normalised such that \( \phi_j(0) = 1 \) for \( j = 1, \ldots, N \). The scaled Legendre polynomials $\phi_j$ are
\begin{align}
    \phi_j(\zeta) = \frac{1}{j!} \frac{d^j}{d\zeta^j} \left( \zeta - \zeta^2 \right)^j, \quad j=1,\ldots,N. \label{eq:scaled_legendre_def}
\end{align}
For example, the first two polynomials are
\begin{align}
    \phi_1(\zeta) = 1 - 2\zeta,\quad
    \phi_2(\zeta) = 1 - 6\zeta + 6\zeta^2.
\end{align}
\\
The basis functions satisfy the orthogonality condition 
\begin{align}
    \int_0^1 \phi_j(\zeta) \phi_k(\zeta) \, \mathrm{d}\zeta = \frac{\delta_{jk}}{2k+1}, \label{eq:orthogonality}
\end{align}
where \( \delta_{jk} \) is the Kronecker delta for all $j,k = 1,\ldots,N$.

The orthogonality property \eqref{eq:orthogonality} is essential to derive the SWME. This property leads to a separate time-derivative term for the $i$-th moment, thereby causing the nonlinear terms to couple the moment variables. As a result, the orthogonality property yields a sparse SWME where additional nonlinear terms do not appear in the time derivative.

In the expansion \eqref{eq:velocity_expansion}, the parameter \( N \in \mathbb{N} \) determines the order of the polynomial approximation. Increasing $N$ generally improves the representation of vertically complex flow behaviour \cite{Kowalski}, whereas setting \( N = 0 \) results in the classical depth-averaged SWE from \eqref{eq:SWEconv}.

After mapping the Navier-Stokes momentum equation in the $z$-direction and substituting the polynomial expansion formula from \eqref{eq:velocity_expansion}, the momentum equation is projected onto Legendre polynomials. Multiplying by $\phi_j$ and integrating over $\zeta \in [0,1]$ yields differential equations for $\alpha_j$ ($j=1,\dots,N$), while projecting onto $\phi_0=1$ gives the equation for $u_m$. The evolution equation for $h$ is derived by depth-integrating the vertically mapped continuity equation. The resulting $N+2$ equations for the variables $h, u_m,\alpha_1,\dots,\alpha_N$ are

\begin{subequations}
\begin{align}
    \partial_t h + \partial_x(hu_m) &= 0, \label{eq:IntroSWME1} \\
    \partial_t(hu_m) + \partial_x \left(h \left(u_m^2 + \sum_{j=1}^N \frac{\alpha_j^2}{2j+1}\right) + \frac{g h^2}{2}\right) \label{eq:IntroSWME2} 
    &= -\frac{\nu}{\lambda} \left(u_m + \sum_{j=1}^N \alpha_j\right)- gh \partial_x(h_b), \\
    \partial_t(h \alpha_i) + \partial_x \left(h \left(2u_m \alpha_i + \sum_{j,k=1}^N A_{ijk} \alpha_j \alpha_k\right)\right) \label{eq:IntroSWME3}  &= u_m \partial_x(h \alpha_i) - \sum_{j,k=1}^N B_{ijk} \partial_x(h \alpha_j) \alpha_k \\
    &- (2i+1)\frac{\nu}{\lambda} \left(u_m + \sum_{j=1}^N \alpha_j + \frac{\lambda}{h} \sum_{j=1}^N C_{ij} \alpha_j\right), \nonumber
\end{align}
\end{subequations}
for \( i = 1, \ldots, N \). 

For simplicity, we assume uniform bathymetry, so $\partial_x(h_b)=0$. The constants $A_{ijk}$, $B_{ijk}$, and $C_{ij}$ for \( i, j, k = 1, \, \ldots, \, N \) denote integrals arising from scaled depth integration. These constants are given by
\begin{align}
    A_{ijk} &= (2i + 1) \int_0^1 \phi_i \phi_j \phi_k \, \mathrm{d}\zeta, \\
    B_{ijk} &= (2i + 1) \int_0^1 \partial_{\zeta} \phi_i \left( \int_0^\zeta \phi_j \, \mathrm{d}\hat{\zeta} \right) \phi_k \, \mathrm{d}\zeta, \\
    C_{ij} &= (2i + 1) \int_0^1 \partial_{\zeta} \phi_i \partial_{\zeta} \phi_j \, \mathrm{d}\zeta.\label{eq:Cconst}
\end{align}

\paragraph{Closed-form SWME} The SWME \eqref{eq:IntroSWME1}-\eqref{eq:IntroSWME3} can be expressed in matrix form as 
\begin{equation}
\partial_tU + \frac{\partial F(U)}{\partial U} \partial_x U = Q(U) \partial_xU + S(U), \label{eq:Systemnoncon}
\end{equation}
where \( U = (h, hu_m, h\alpha_1, \dots, h\alpha_N) \in \mathbb{R}^{N+2} \). 
Here, the Jacobian of the conservative flux \(\frac{\partial F(U)}{\partial U} \in \mathbb{R}^{(N+2) \times (N+2)}\) is defined as 
\begin{align*}
\frac{\partial F(U)}{\partial U} =
{\begin{pmatrix}
0&1&0&\cdots &0\\
gh-u_m^2-\sum_{i=1}^N\frac{\alpha_i}{2i+1}&2u_m&\frac{2\alpha_1}{2\cdot1+1}&\cdots&\frac{2\alpha_N}{2N+1}\\
-2u_m\alpha_1-\displaystyle{\sum_{j,k=1}^NA_{1jk}\alpha_j\alpha_k}&2\alpha_1&2u_m\delta_{11}+2\displaystyle{\sum_{k=1}^NA_{11k}\alpha_k}&\cdots&2u_m\delta_{1N}+2\displaystyle{\sum_{k=1}^NA_{1Nk}\alpha_k}\\
\vdots&\vdots&\vdots&\ddots&\vdots\\
-2u_m\alpha_N-\displaystyle{\sum_{j,k=1}^NA_{Njk}\alpha_j\alpha_k}&2\alpha_N&2u_m\delta_{N1}+2\displaystyle{\sum_{k=1}^NA_{N1k}\alpha_k}&\cdots&2u_m\delta_{NN}+2\displaystyle{\sum_{k=1}^NA_{NNk}\alpha_k}
\end{pmatrix}}.
\end{align*}
The non-conservative term \(Q(U)\in \mathbb{R}^{(N+2) \times (N+2)}\) is given by
\begin{align*}
    Q(U) = \begin{pmatrix}
        0&0&0&\cdots&0\\
        0&0&0&\cdots&0\\
        0&0&u_m\delta_{11}+\displaystyle{\sum_{k=1}^NB_{11k}\alpha_k}&\cdots&u_m\delta_{1N}+\displaystyle{\sum_{k=1}^NB_{1Nk}\alpha_k}\\
        \vdots&\vdots&\vdots&\ddots&\vdots\\
        0&0&u_m\delta_{N1}+\displaystyle{\sum_{k=1}^NB_{N1k}\alpha_k}&\cdots&u_m\delta_{NN}+\displaystyle{\sum_{k=1}^NB_{NNk}\alpha_k}\\
    \end{pmatrix}.
\end{align*}
The friction term \(S(U)\in \mathbb{R}^{N+2}\) includes the Newtonian slip terms from \cite{Kowalski} and is determined by
\begin{align}
\begin{aligned}
    S_0(U) = 0, \quad S_1(U) = -\frac{\nu}{\lambda}\displaystyle{\sum_{j=1}^N\alpha_j},\quad
    S_i(U) = -(2i+1)\frac{\nu}{\lambda}\displaystyle{\sum_{j=1}^N\alpha_j}-(2i+1)\frac{\nu}{h}\displaystyle{\sum_{j=1}^NC_{ij}\alpha_j}, \quad i=2,\dots,N+2.
    \end{aligned}\label{eq:SourceTerm}
\end{align}
The system matrix $A(U)\in  \mathbb{R}^{(N+2) \times (N+2)}$ is defined as
\begin{align}
    A(U)=\frac{\partial F(U)}{\partial U}-Q(U), \label{eq:SystemA}
\end{align}
Consequently, the full system \eqref{eq:Systemnoncon} is expressed as
\begin{align}
    \partial_tU + A(U) \partial_x(U) = S(U).\label{eq:SystemPDE}
\end{align}

\section{Asymptotic approximations of the SWME with viscous slip scaling}

In this section we present asymptotic approximations of the SWME using Chapman Enskog expansion. To this end, we introduce a small positive parameter, \( \varepsilon \ll 1 \), and apply a scaling to the friction parameters, \( \nu \) and \( \lambda \) in \eqref{eq:SourceTerm}. We apply asymptotic expansions to the variables in the system \eqref{eq:SystemPDE}, according to the expression
\begin{align}
    U_i=U_i^{(0)}+\varepsilon U_i^{(1)}+\varepsilon^2U_i^{(2)}+\mathcal{O}(\varepsilon^3), \quad i =1,\ldots,N+2. \label{eq:generalexp}
\end{align}
The values of the different magnitude coefficients $U_i^{(j)}$ for $i=1,\ldots,N+2$ and $j=0,1,2$ are obtained by substituting \eqref{eq:generalexp} into \eqref{eq:IntroSWME1}-\eqref{eq:IntroSWME3} or equivalently \eqref{eq:SystemPDE} and systematically solving for varying orders of $\varepsilon$. We observe that as $\varepsilon \to 0$, the asymptotic expansion \eqref{eq:generalexp} reduces to
\begin{equation}
    U_i = U_i^{(0)}, \quad i = 1, \dots, N+2,
\end{equation}
which we refer to as the leading-order solution.
Subsequently, substituting asymptotic approximations yields the dimensionally reduced closure of the SWME.

In this study, a viscous slip scaling is chosen such that the viscosity is $\nu = \mathcal{O}(\varepsilon^{-1})$, and the slip length is $\lambda = \mathcal{O}(\varepsilon^{-1})$. We introduce the following scalings, given by
\begin{align}
\lambda = \frac{\lambda_0}{\varepsilon},\quad \nu = \frac{\nu_0}{\varepsilon}, \label{eq:blah}
\end{align}
where $\lambda_0$ and $\nu_0$ are assumed dimensionless.
The choice of a large slip length $\lambda$ is motivated by the findings in \cite{Huang2022}, where the limit $\lambda \to \infty$ corresponds to a stable equilibrium with a constant velocity profile in the vertical direction and vanishing moment variables. In the vicinity of this equilibrium, where $\lambda \gg 1$ but remains finite, we investigate approximations of the moment variables $\alpha_j$ for $i=1,\ldots,N$. Moreover, we prove that by selecting a large viscosity $\nu=\mathcal{\mathcal{O}}(\varepsilon^{-1})$, we can obtain a closure of the SWME model with dimensional reduction. In \ref{A:scalings}, alternative scalings of friction parameters are discussed. 

Applying the viscous slip scaling \eqref{eq:blah} to \eqref{eq:IntroSWME3} and letting $\varepsilon\rightarrow 0$ yields
\begin{align}
    \sum_{j=1}^NC_{ij}\alpha_j=0,\quad \forall i =1,\ldots,N.\label{eq:leadingorder1}
\end{align}
Equation \eqref{eq:leadingorder1} can be expressed in matrix-vector form as
\begin{align}
    \boldsymbol{C}_{N}\boldsymbol{\alpha}_N = \boldsymbol{0}, \label{eq:leadingorder2}
\end{align}
where $\boldsymbol{C}_{N} \in \mathbb{R}^{N \times N}$ is a matrix with entries $\boldsymbol{C}_{Nij} = C_{ij}$ and $\boldsymbol{\alpha}_N \in \mathbb{R}^N$ is a vector with entries $(\boldsymbol{\alpha}_N)_j = \alpha_j$, for $i, j = 1, \ldots, N$.
  According to \cite{Huang2022}, the matrix $\boldsymbol{C}_{N}$ is invertible. Therefore, the only possible leading order solution for \eqref{eq:leadingorder1} is $\alpha_j =0, \forall j=1,\ldots,N$. 

In \cite{Huang2022}, letting the slip variable $\lambda \rightarrow\infty$ results in a stable constant-velocity equilibrium in the HSWME and SWLME systems such that $\alpha_j =0$ for $ j=1,\dots,N$. This large slip equilibrium corresponds to the viscous slip leading order solution from \eqref{eq:leadingorder2}. In this study, our objective is to derive approximations for the viscous slip SWME system in the vicinity of $\alpha_j =0$ for $ j=1,\dots,N$. Notably, setting the moment variables $\alpha_j$ ($j=1,\ldots,N$) to zero results in a hyperbolic system \cite{Kowalski}. 

Under the scaling defined in \eqref{eq:blah}, the equations governing $h$ \eqref{eq:IntroSWME1} and $hu_m$ \eqref{eq:IntroSWME2} do not depend on $\varepsilon$. Consequently, taking the limit $\varepsilon \to 0$ in the SWME does not yield a leading-order solution for either $h$ or $hu_m$. Thus, this independence renders the derivation of asymptotic expansions for the variables $h$ and $u_m$ impossible. However, as demonstrated earlier, the equation for $\alpha_j$, for $j=1,\dots,N$, given by \eqref{eq:IntroSWME3}, does admit a leading-order solution under the scaling \eqref{eq:blah}. Subsequently, the variables $\alpha_i$ are asymptotically expanded as
\begin{equation}
    \alpha_i = \alpha_i^{(0)} + \varepsilon \alpha_i^{(1)} + \varepsilon^2 \alpha_i^{(2)} + \mathcal{O}(\varepsilon^3), \quad i=1,\dots,N.
    \label{eq:ChapmanEnskog}
\end{equation}
These expansions \eqref{eq:ChapmanEnskog} are then substituted into the governing system \eqref{eq:IntroSWME3}.

In the remainder of this section, an order-of-magnitude analysis is performed, in which the terms of the leading order, first order, and second order in $\varepsilon$ are systematically retained. This analysis enables a dimensional reduction of the system in the proximity of the viscous slip leading order solution, ensuring that the dynamics are captured at multiple scales of $\varepsilon$. The reduction process involves expressing the closure relations of $\alpha_j$ for $j=1,\ldots,N$ as functions of the remaining variables $h$ and $u_m$, incorporating combinations of terms at varying orders of $\varepsilon$. Finally, substituting these asymptotic expansions as closure relations back into \eqref{eq:IntroSWME3} closes the system, yielding the Reduced Shallow Water Moment Equation (RSWME).

Note that these closure relations depend on the scaling. In \ref{A:scalings}, we explore possible asymptotic approximations for the SWME using alternative scalings of the viscosity $\nu$ and slip length $\lambda$.

We begin by deriving closure relations and closed systems for the SWME \eqref{eq:IntroSWME1}-\eqref{eq:IntroSWME3} under the viscous slip scaling \eqref{eq:blah}. The analysis proceeds systematically: in Subsection \ref{s:N1section}, we address the case \( N = 1 \); in Subsection \ref{s:N2section}, we extend the framework to \( N = 2 \); and in Subsection \ref{s:Ngensection}, we generalise the results to arbitrary \( N \). A key finding is that for \( N \geq 2 \), all closed RSWME systems are identical.

Additionally, we analyse the hyperbolicity of these systems for $ N = 1 $, $ N = 2 $, and arbitrary $ N $. Our analysis shows that none of these systems are globally hyperbolic. To resolve the loss of hyperbolicity, we introduce a hyperbolic regularisation.

\subsection{Asymptotic expansion for SWME with $N=1$}
\label{s:N1section}
In \cite{Wang2023}, an asymptotic expansion for the coefficient $\alpha_1$ is derived for the SWME from \eqref{eq:IntroSWME1}-\eqref{eq:IntroSWME3} with $N=1$. We follow the explanation in \cite{Wang2023}, to present the main idea, before generalising to larger $N$. Let SWME1 denote the SWME model of order $N=1$. When the viscous slip scaling from  \eqref{eq:blah} is applied to SWME1, the governing equations become

\begin{align}
\begin{aligned}
    \label{eq:SWME_viscous_system}
    \partial_t \begin{pmatrix}
        h \\
        h u_m \\
        h \alpha_1
    \end{pmatrix}
    + \partial_x \begin{pmatrix}
        h u_m \\
        h u_m^2 + \frac{1}{3} h \alpha_1^2 + g \frac{h^2}{2} \\
        2 h u_m \alpha_1
    \end{pmatrix}
    &= \begin{pmatrix}
        0 \\
        0 \\
        u_m \partial_x (h \alpha_1)
    \end{pmatrix}
    - \frac{\nu_0}{\lambda_0} \begin{pmatrix}
        0 \\
        u_m + \alpha_1 \\
        3 \left( u_m + \alpha_1 + \frac{4 \lambda_0}{\varepsilon h} \alpha_1 \right)
    \end{pmatrix}.
    \end{aligned}
\end{align}
The last equation of \eqref{eq:SWME_viscous_system} depends on $\varepsilon$, which motivates expanding the variable $\alpha_1$ as
\begin{align}
\alpha_1 = \alpha_1^{(0)} + \varepsilon \alpha_1^{(1)} + \varepsilon^2 \alpha_1^{(2)} + \mathcal{O}(\varepsilon^3). \label{eq:alpha1_expansion}
\end{align}
By substituting \eqref{eq:alpha1_expansion} into the last equation of system \eqref{eq:SWME_viscous_system} and collecting terms of the same orders in \(\varepsilon\), we obtain explicit expressions for the asymptotic components \(\alpha_1^{(i)}\) (\(i = 0, 1, 2\)).

First, the \(\mathcal{O}(\varepsilon^{-1})\) terms in the last equation of \eqref{eq:SWME_viscous_system} are analysed to derive \(\alpha_1^{(0)}\). Next, \(\alpha_1^{(0)}\) is substituted back into the last equation of \eqref{eq:SWME_viscous_system}, and the \(\mathcal{O}(\varepsilon^{0})\) component is retained to determine the closure relation for \(\alpha_1^{(1)}\). Finally, the \(\mathcal{O}(\varepsilon^{1})\) component of \eqref{eq:SWME_viscous_system} is retained to derive the closure relation for \(\alpha_1^{(2)}\).

\paragraph{The $\mathcal{O}(\varepsilon^{-1})$-approximation of the SWME1} Let us substitute the asymptotic expansion \eqref{eq:alpha1_expansion} of \(\alpha_1\) into the last equation of \eqref{eq:SWME_viscous_system}. This substitution produces the \(\mathcal{O}(\varepsilon^{-1})\)-order component of \eqref{eq:SWME_viscous_system}, which is given by
\begin{equation}
    \label{eq:leading_order_alpha1}
    \frac{4 \lambda_0}{h} \alpha_1^{(0)} = 0
    \quad \Longrightarrow \quad
    \alpha_1^{(0)} = 0.
\end{equation}

\paragraph{The $\mathcal{O}(\varepsilon^{0})$-approximation of the SWME1} By retaining the \(\mathcal{O}(\varepsilon^{0})\)-terms of the last evolution equation in system \eqref{eq:SWME_viscous_system} and substituting \(\alpha_1^{(0)} = 0\), the closure relation for \(\alpha_1^{(1)}\) is obtained as 
\begin{align}
    \label{eq:first_order_alpha1}
    u_m + \frac{4 \lambda_0}{h} \alpha_1^{(1)} = 0
    \quad \Longrightarrow \quad
    \alpha_1^{(1)} = -\frac{u_m h}{4 \lambda_0}.
\end{align}

\paragraph{The $\mathcal{O}(\varepsilon^{1})$-approximation of the SWME1} Substituting $\alpha_1^{(1)}$ from \eqref{eq:first_order_alpha1} into the $\mathcal{O}(\varepsilon)$ terms of \eqref{eq:SWME_viscous_system} gives
\begin{equation}
    \label{eq:momentum_correction2}
    \partial_t (h^2 u_m) + 2 \partial_x (h^2 u_m^2) = u_m \partial_x (h^2 u_m) - 3 \frac{\nu_0}{\lambda_0} \left( u_m h - \frac{16 \lambda_0^2}{h} \alpha_1^{(2)} \right).
\end{equation}
To simplify \eqref{eq:momentum_correction2}, we use \cite{Wang2023} to obtain
\begin{equation}
    \label{eq:momentum_balance2}
    \partial_t (h^2 u_m) = -\frac{1}{2} \partial_x (h^2 u_m^2) - h \left[ \partial_x \left( h u_m^2 + \frac{g h^2}{2} \right) + \frac{\nu_0}{\lambda_0} u_m \right].
\end{equation}
Equation \eqref{eq:momentum_balance2} follows from applying the chain rule and combining the mass conservation equation and first equation of \eqref{eq:SWME_viscous_system}, with the $\mathcal{O}(\varepsilon^0)$ momentum terms in the second line of \eqref{eq:SWME_viscous_system} \cite{Wang2023}.

 Substituting \eqref{eq:momentum_balance2} into \eqref{eq:momentum_correction2}, the closure relation for $\alpha_1^{(2)}$ is given by
\begin{equation}
    \label{eq:alpha1_final2}
    \alpha_1^{(2)} = \frac{1}{48 \nu_0 \lambda_0} \left( -\frac{g}{4} \partial_x (h^4) + \frac{2 \nu_0}{\lambda_0} u_m h^2 \right).
\end{equation}

\paragraph{The closed reduced SWME1 model} Substituting \eqref{eq:leading_order_alpha1}, \eqref{eq:first_order_alpha1}, and \eqref{eq:momentum_correction2} into \eqref{eq:alpha1_expansion} the closure relation for \(\alpha_1\) is derived as
\begin{align}
    \alpha_1 = \varepsilon \left( -\frac{h u_m}{4 \lambda_0} \right)
    + \varepsilon^2 \left[ \frac{1}{48 \nu_0 \lambda_0} \left( -\frac{g}{4} \partial_x (h^4) + 2 \frac{\nu_0 u_m h^2}{\lambda_0} \right) \right]
    + \mathcal{O}(\varepsilon^3). \label{eq:final1_alpha1}
\end{align}
To obtain a closed system for \(h\) and \(hu_m\), substitute \(\alpha_1\) from \eqref{eq:final1_alpha1} into \eqref{eq:SWME_viscous_system}. By neglecting terms of order \(\mathcal{O}(\varepsilon^3)\) and higher, the closed Reduced Shallow Water Moment Equations for \(N=1\) (RSWME1) are defined as \cite{Wang2023}
\begin{align}
\begin{aligned}
    \partial_th +\partial_x(hu_m) &= 0,\\
    \partial_t(hu_m) +\partial_x\left(hu_m^2T_1^1(h,u_m)+\frac{gh^2}{2}T_2^1(h,u_m)\right)&=-\frac{\nu_0 u_m}{\lambda_0}T_3^1(h,u_m),\label{eq: ASWME1}
\end{aligned}
\end{align}
where
\begin{align}
\begin{aligned}
T_1^1(h,u_m)=1+\frac{\varepsilon^2h^2}{48\lambda_0^2} ,\quad
T_2^1(h,u_m) = 1-\frac{\varepsilon^2  h^2}{96\lambda_0^2},\quad
    T_3^1(h,u_m) =1-\frac{\varepsilon h}{4\lambda_0}+\frac{\varepsilon^2 h^2}{24\lambda_0^2}.
\end{aligned}
\end{align}

\begin{remark} \label{rem:Remark1}
    The Boussinesq coefficient, which captures variations from a uniform velocity profile, is given by \cite{Liggett}
   \begin{align}
       \beta =\frac{1}{h u_m^2} \int_{h_b}^{h_s}u(x,z,t)^2\,\mathrm{d}z. \label{eq:boussisqcoeff}
   \end{align}
   In the context of the SWME1 framework in \cite{koellermeier2025energy}, this coefficient takes the form
   \begin{align}
       \beta =1+\frac{1}{3}\frac{\alpha_1^2}{u_m^2}.
   \end{align}
   Substituting the closure relation \eqref{eq:final1_alpha1} for $\alpha_1$ and ignoring the terms $\mathcal{O}(\varepsilon^3)$ and $\mathcal{O}(\varepsilon^4)$ yields 
   \begin{align}
       \beta \approx 1+\frac{1}{48}\frac{\varepsilon^2h^2}{\lambda_0^2}=T_1^1(h,u_m).
   \end{align}
   This finding means that the advection term of the RSWME1 is consistent with the approximation of a vertical velocity profile using the Boussinesq coefficient. This fact highlights the relevance of our model in view of similar models in the literature, see \cite{Liggett}.
\end{remark}

We derived a closed system \eqref{eq: ASWME1} for $h$ and $u_m$ that takes into account small deviations around the viscous slip leading order solution with a constant velocity profile using an explicit closure for $\alpha_1$ of $\mathcal{O}(\varepsilon^2)$ that was computed using the order-of-magnitude approach for terms up to $\mathcal{O}(\varepsilon^2)$. This means that the RSWME1 system is in theory able to approximate the SWME1 with a large viscosity and a large slip length while only numerically solving a system for two variables instead of three. 

The RSWME1 from \eqref{eq: ASWME1} depend on the same variables as the SWE \eqref{eq:SWEconv}. Furthermore, as \(\varepsilon \to 0\), the RSWME1 are consistent with the SWE. In contrast to the {SWE}, the friction term in the {RSWME1} incorporates additional contributions proportional to \(\varepsilon u_m h\) and \(\varepsilon^2 u_m h^2\), which represent variations around the mean velocity \(u_m\). Furthermore, the {RSWME1} features additional \(\mathcal{O}(\varepsilon^2)\)-linear combinations of \(h^3 u_m^2\) and \(h^2\) in the transport matrix on its left-hand side, which are not present in the {SWE}.

\paragraph{Hyperbolicity of the RSWME1} The RSWME1 system \eqref{eq: ASWME1} can be expressed in non-conservative form as
\begin{align}
    \partial_t U + A^{1}(U) \partial_x U = S^1(U), \label{eq:analyticalRSWME1}
\end{align}
where \( U = (h, hu_m) \), and the system matrix \( A^1(U) \in \mathbb{R}^{2\times2}  \) is given by
\begin{align}
    A^1(U) = \begin{pmatrix}
        0 & 1 \\
        -u_m^2 \left(1 - \frac{\varepsilon^2h^2}{48\lambda_0^2}\right) + gh  - \frac{g\varepsilon^2 h^3}{48\lambda_0^2} & 2u_m \left(1 + \frac{\varepsilon^2 h^2}{48\lambda_0^2}\right)
    \end{pmatrix}. \label{eq:SystemMat1}
\end{align}

Additionally the source-term $S^1(U)\in \mathbb{R}^{2}$ is defined as
\begin{align}
S^1(U) = \begin{pmatrix}
    0\\
    -\frac{\nu_0u_m}{\lambda_0}\left(1-\frac{\varepsilon h}{4\lambda_0}+\frac{\varepsilon^2h^2}{24\lambda_0^2}\right).
\end{pmatrix}
\end{align}

The RSWME1 (in the form \eqref{eq:analyticalRSWME1}) enable a deeper analysis of the system. The eigenvalues of the matrix $A^1(U)$ from \eqref{eq:SystemMat1} correspond to the propagation speeds of the waves, a critical property of the system. If these eigenvalues are complex, real propagation speeds no longer exist, potentially leading to instabilities. In this subsection, we investigate whether hyperbolicity is preserved in \eqref{eq:analyticalRSWME1}.

To analyse hyperbolicity, we determine whether the eigenvalues $\lambda_{1,2}$ of \( A^1(U) \) from \eqref{eq:SystemMat1} are real and distinct. These eigenvalues are given by
 \begin{equation}
    \lambda_{1,2} = u_m \left(1 + \frac{h^2 \varepsilon^2}{48\lambda_0^2}\right) \pm \sqrt{gh} \sqrt{\frac{\varepsilon^4 h^3 u_m^2}{2304\lambda_0^4g} - \frac{\varepsilon^2 h^2 }{48\lambda_0^2} + \frac{\varepsilon^2 h u_m^2}{16\lambda_0^2g} + 1}.\label{eq:SqrtIa}
\end{equation}
For hyperbolicity, a positive discriminant is required, which is
\begin{align}
    \left(\frac{\varepsilon^4 h^3 }{2304\lambda_0^4g}+\frac{\varepsilon^2 h}{16\lambda_0^2g}\right)u_m^2 - \frac{\varepsilon^2 h^2 }{48\lambda_0^2}   + 1 \geq 0.\label{eq:Sqrt1}   
\end{align}
Given the difficulty of determining \( h \) and $u_m$ for which Condition \eqref{eq:Sqrt1} holds, we first analyse the case where \( u_m = 0 \). For hyperbolicity in this specific case, the following condition on $h$ must hold
\begin{equation}
    0 < h < \frac{4\sqrt{3}\lambda_0}{\varepsilon}. \label{eq:condition1}
\end{equation}
Because the terms in \eqref{eq:Sqrt1} depending on \( u_m^2 \)  increase monotonically with \( u_m \), hyperbolicity at \( u_m = 0 \) means that hyperbolicity holds for all \( u_m>0 \).

Condition \eqref{eq:condition1} imposes that \(\frac{h}{\lambda} = \frac{h\varepsilon}{\lambda_0} = \mathcal{O}(\varepsilon)\) must remain below a threshold, in line with the scaling (\(\varepsilon \ll 1\)). Consequently, hyperbolicity breaks down when the RSWME1 no longer capture the viscous slip regime.

\begin{theorem}[Hyperbolic regularisation of RSWME1]
\label{thm:hyperbolicity_reg1}
The RSWME1 system \eqref{eq: ASWME1} admits a globally hyperbolic regularisation by adding the following term to the left-hand side of its last equation:
\begin{align}
\partial_x\left(\frac{4 \varepsilon^4 h^6g}{6\cdot 192^2\lambda_0^4}\right),\label{eq:regularisation1}
\end{align}

\end{theorem}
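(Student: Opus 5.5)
The plan is to redo the eigenvalue computation of \eqref{eq:SqrtIa} for the modified system and show that the added term turns the discriminant into a sum of non-negative terms. Adding \eqref{eq:regularisation1} to the left-hand side of the momentum equation of \eqref{eq: ASWME1} is, in quasilinear form \eqref{eq:analyticalRSWME1}, the same as adding $\partial_h\big(\tfrac{4\varepsilon^4 g h^6}{6\cdot 192^2\lambda_0^4}\big)\,\partial_x h$. So only the $(2,1)$ entry of $A^1(U)$ in \eqref{eq:SystemMat1} changes. Since $4/192^2 = 1/96^2$, this entry gains
\begin{align*}
\frac{\varepsilon^4 g h^5}{96^2\lambda_0^4}.
\end{align*}
The first row $(0,1)$ and the $(2,2)$ entry are unchanged.

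Next I introduce the shorthand $\delta := \frac{\varepsilon^2 h^2}{48\lambda_0^2}\ge 0$. For a matrix $\begin{pmatrix}0&1\\ a&b\end{pmatrix}$ the eigenvalues solve $\mu^2 - b\mu - a = 0$, so their reality is governed by $b^2+4a$. In terms of $\delta$ the entries are
\begin{align*}
b = 2u_m(1+\delta), \qquad a = -u_m^2(1-\delta) + gh(1-\delta) + \frac{gh\,\delta^2}{4},
\end{align*}
where the last term is the new entry, because $\frac{\varepsilon^4 g h^5}{96^2\lambda_0^4} = gh\big(\tfrac{\delta}{2}\big)^2$.

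Expanding gives the key identity:
\begin{align*}
b^2 + 4a = 4u_m^2(3\delta + \delta^2) + 4gh\left(1 - \delta + \frac{\delta^2}{4}\right) = 4u_m^2\,\delta(3+\delta) + 4gh\left(1-\frac{\delta}{2}\right)^2 .
\end{align*}
Both terms are non-negative for every $h>0$ and every $u_m$. Hence the eigenvalues
\begin{align*}
u_m(1+\delta) \pm \tfrac12\sqrt{b^2+4a}
\end{align*}
are real for all states, with no restriction like \eqref{eq:condition1}. The constant in \eqref{eq:regularisation1} is chosen exactly so that the $h$-part of the discriminant completes the square $(1-\delta/2)^2$; verifying this algebra is the crux, and the rest is bookkeeping.

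The step I expect to be the main obstacle is the degenerate set where the discriminant vanishes. Since $\delta>0$ for $h>0$, this happens exactly when $u_m = 0$ and $\delta = 2$, i.e. $h = \sqrt{96}\,\lambda_0/\varepsilon$. There the matrix is
\begin{align*}
\begin{pmatrix}0&1\\0&0\end{pmatrix},
\end{align*}
which has a double eigenvalue and is not diagonalisable. I would handle this by stating hyperbolicity in the sense used in \eqref{eq:Sqrt1}, namely a non-negative discriminant and hence real propagation speeds. I would then remark that strict hyperbolicity holds everywhere except on this single isolated point of state space. That point also lies far outside the viscous slip regime $h/\lambda = \mathcal{O}(\varepsilon)$ in which the RSWME1 are valid.

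Finally, the added flux is $\mathcal{O}(\varepsilon^4)$, so it lies beyond the $\mathcal{O}(\varepsilon^2)$ accuracy of \eqref{eq: ASWME1}. The regularised system is therefore consistent with the RSWME1 to the order retained.
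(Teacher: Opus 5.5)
Your computation is correct and follows the paper's route. In both, only the $(2,1)$ entry of $A^1$ changes, and the discriminant splits into non-negative $u_m^2$-terms plus a $u_m$-independent part. The paper also writes that part as a perfect square in \eqref{eq:hyperolicsqrtI}, namely $\pm\sqrt{gh}\,(1-\delta/2)$ in your notation.

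Your treatment of the degenerate state is more accurate than the paper's proof. The paper asserts that the radicand is strictly positive at $u_m=0$ for every $h>0$, and concludes that the eigenvalues are real and distinct. Its own formula \eqref{eq:hyperolicsqrtI}, however, gives a double eigenvalue $0$ at $h=\sqrt{96}\,\lambda_0/\varepsilon$. At that state $A^{1,H}$ is exactly the Jordan block you found.

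So the theorem holds only in the weaker sense you state: real wave speeds everywhere, with strict hyperbolicity away from that one isolated state. If genuine global strict hyperbolicity is wanted, enlarge the regularisation so that the added $(2,1)$ entry is $c\,gh\,\delta^2$ with any $c>1/4$. Then $1-\delta+c\,\delta^2>0$ for all $\delta$, and the added term is still $\mathcal{O}(\varepsilon^4)$.
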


\begin{proof}

To establish global hyperbolicity, we augment the last equation of system \eqref{eq: ASWME1} with the regularisation term \eqref{eq:regularisation1}, such that the modified system matrix \eqref{eq:SystemMat1} is given by
\begin{align}
    A^{1,H}= \begin{pmatrix}
        0 & 1 \\
        -u_m^2 \left(1 - \frac{\varepsilon^2h^2}{48\lambda_0^2}\right) + gh - \frac{\varepsilon^2 h^3g}{48\lambda_0^2}+\frac{4h^5\varepsilon^4g}{192^2 \lambda_0^4 } & 2u_m \left(1 + \frac{\varepsilon^2 h^2}{48\lambda_0^2}\right)
    \end{pmatrix}.\label{eq:systemHRSWME1}
\end{align}
The eigenvalues of the system matrix \eqref{eq:systemHRSWME1} of the Hyperbolic RSWME1 (HRSWME1) are
\begin{align}
\begin{aligned}
\lambda_{1,2} = u_m \left(1 + \frac{h^2 \varepsilon^2}{48\lambda_0^2}\right)\pm \sqrt{gh} \sqrt{\frac{\varepsilon^4 h^3 u_m^2}{2304\lambda_0^4g} - \frac{\varepsilon^2 h^2}{48\lambda_0^2} + \frac{\varepsilon^2 h u_m^2}{16\lambda_0^2g} + 1 + \frac{4\varepsilon^4  h^4}{192^2\lambda_0^4}}. \end{aligned}\label{eq:hyperolicsqrt}
\end{align}
For the special case where \(u_m = 0\), the eigenvalues are
\begin{align}
    \lambda_{1,2} = \pm \sqrt{gh}\left(1-\frac{2h^2}{192\lambda_0^2}\varepsilon^2\right).\label{eq:hyperolicsqrtI}
\end{align}
The term under the square root in \eqref{eq:hyperolicsqrt} is strictly positive for all $h > 0$ when $u_m = 0$, and hence this value remains positive for all $u_m$ as the terms containing $u_m$ are greater or equal than zero for $h>0$. Thus, for all $h > 0$ and $u_m$, the eigenvalues of \eqref{eq:systemHRSWME1} are real and distinct, ensuring the global hyperbolicity of HRSWME1.

\end{proof}
It is important to note that the inclusion of the $\mathcal{O}(\varepsilon^4)$-term \eqref{eq:regularisation1} does not reduce the accuracy of the HRSWME1, given that the RSWME1 are of order $\mathcal{O}(\varepsilon^2)$.

\subsection{Asymptotic expansion for SWME with $N=2$}
\label{s:N2section}
In this subsection, the $\mathcal{O}(\varepsilon^2)$-order SWME for $N=2$ (SWME2) are simplified using asymptotic approximations, in preparation for deriving a reduced SWME system for arbitrary $N$.
 This reduction decreases the system from four to two equations, analogous to the previously derived $N=1$ case.
The viscous slip system SWME2 based on \eqref{eq:IntroSWME1}-\eqref{eq:IntroSWME3} are given by
\begin{align}
\label{eq: Wanggen}
\begin{aligned}
    \partial_t \begin{pmatrix}
        h  \\ h u_m \\
        h\alpha_1\\
        h \alpha_2
    \end{pmatrix}+
    \partial_x \begin{pmatrix}
        h u_m \\ h u_m^2 +\frac{1}{3}h \alpha_1^2+\frac{1}{5}h \alpha_2^2+ g\frac{h^2}{2}\\
        2h u_m \alpha_1+ \frac{4}{5}\alpha_1\alpha_2\\
        2 hu_m \alpha_2 +\frac{2}{3}h \alpha_1^2+\frac{2}{7}h\alpha_2^2
        \end{pmatrix}
        &= \begin{pmatrix}
            0\\
            0\\
            (u_m-\frac{1}{5}\alpha_2)\partial_x(h\alpha_1)+\frac{1}{5}\alpha_1\partial_x(h\alpha_2)\\
            \alpha_1\partial(h\alpha_1)+(u_m+\frac{\alpha_2}{7})\partial_x(h\alpha_2))
        \end{pmatrix}\\&-\frac{\nu_0}{\lambda_0}\begin{pmatrix}
            0 \\ u_m +\alpha_1+\alpha_2\\
            3(u_m +\alpha_1+\alpha_2+\frac{4 \lambda_0}{\varepsilon h}\alpha_1)\\
            5(u_m +\alpha_1+\alpha_2+\frac{12\lambda_0}{\varepsilon h}\alpha_2)
        \end{pmatrix}. 
        \end{aligned}
\end{align}
Similar to the RSWME1 case in Subsection \ref{s:N1section}, the asymptotic expansions for $\alpha_1$ and $\alpha_2$ are expressed as
\begin{align}
\alpha_1 &= \alpha_1^{(0)} + \varepsilon \alpha_1^{(1)} + \varepsilon^2 \alpha_1^{(2)} + \mathcal{O}(\varepsilon^3), \label{eq:expansion_alpha1} \\
\alpha_2 &= \alpha_2^{(0)} + \varepsilon \alpha_2^{(1)} + \varepsilon^2 \alpha_2^{(2)} + \mathcal{O}(\varepsilon^3). \label{eq:expansion_alpha2}
\end{align}
By substituting \eqref{eq:expansion_alpha1} and \eqref{eq:expansion_alpha2} into the final two equations of system \eqref{eq: Wanggen}, the coefficients $\alpha_1^{(i)}$ and $\alpha_2^{(i)}$ for $i = 0, 1, 2$ are systematically derived order by order. 

\paragraph{The $\mathcal{O}(\varepsilon^{-1})$ approximation of the SWME2}  Substitute the asymptotic expansions \eqref{eq:expansion_alpha1} and \eqref{eq:expansion_alpha2} into the final two equations of  \eqref{eq: Wanggen} and retain only the $\mathcal{O}(\varepsilon^{-1})$-order terms. Solving for $\alpha_1^{(0)}$ and $\alpha_2^{(0)}$ yields
\begin{equation}
\begin{pmatrix}
\alpha_1^{(0)} \\
\alpha_2^{(0)}
\end{pmatrix}
= \mathbf{0}.
\label{eq:leading_order_solution2}
\end{equation}

\paragraph{The $\mathcal{O}(\varepsilon^0)$ approximation of the SWME2}
Retaining only the $\mathcal{O}(\varepsilon^{0})$-order terms in \eqref{eq: Wanggen} and incorporating \eqref{eq:leading_order_solution2}, we solve for $\alpha_1^{(1)}$ and $\alpha_2^{(1)}$ to obtain
\begin{align}
  \begin{pmatrix}
        \alpha_1^{(1)}\\
        \alpha_2^{(1)}
    \end{pmatrix}=
    \begin{pmatrix}
        \frac{-hu_m}{4\lambda_0}\\
        \frac{-hu_m}{12\lambda_0}
    \end{pmatrix}. \label{eq:viscous_contribution21}
\end{align}

\paragraph{The $\mathcal{O}(\varepsilon^1)$ approximation of the SWME2}
The $\mathcal{O}(\varepsilon)$-order contribution of \eqref{eq: Wanggen} yields closure relations
\begin{align}
   \begin{pmatrix}
\alpha_1^{(2)} \\
\alpha_2^{(2)}
\end{pmatrix}
=
\frac{1}{\nu_0 \lambda_0}
\begin{pmatrix}
\frac{1}{48}\left(\frac{-g}{4} \partial_x(h^4) + 3 \frac{\nu_0}{\lambda_0} u_m h^2\right)\\
\frac{1}{720}\left(\frac{-g}{4} \partial_x(h^4) + 19 \frac{\nu_0}{\lambda_0} u_m h^2\right)
\end{pmatrix}. \label{eq:viscous_contribution22}
\end{align}
The closure relations from \eqref{eq:viscous_contribution22} follow the derivation steps of $\alpha_1^{(2)}$ in Subsection \ref{s:N1section}.

\paragraph{The closed reduced SWME2 model}

Substituting the closure relations from  \eqref{eq:leading_order_solution2}, \eqref{eq:viscous_contribution21}, and \eqref{eq:viscous_contribution22} into the asymptotic expansions \eqref{eq:expansion_alpha1}-\eqref{eq:expansion_alpha2} leads to the closure relations for the coefficients \(\alpha_1\) and $\alpha_2$:
\begin{align}
    \alpha_1 &= \varepsilon \left( -\frac{h u_m}{4 \lambda_0} \right)
    + \varepsilon^2 \left( \frac{1}{48 \nu_0 \lambda_0} \left( -\frac{g}{4} \partial_x (h^4) + 3 \frac{\nu_0 u_m h^2}{\lambda_0} \right) \right)
    + \mathcal{O}(\varepsilon^3), \label{eq:final2_alpha1} \\
    \alpha_2 &= \varepsilon \left( -\frac{h u_m}{12 \lambda_0} \right)
    + \varepsilon^2 \left( \frac{1}{720 \nu_0 \lambda_0} \left( -\frac{g}{4} \partial_x (h^4) + 19 \frac{\nu_0 u_m h^2}{\lambda_0} \right) \right)
    + \mathcal{O}(\varepsilon^3). \label{eq:final2_alpha2}
\end{align}
To obtain the closed system for \(h\) and \(hu_m\), the expressions for $\alpha_1$ and \(\alpha_2\) from \eqref{eq:final2_alpha1} and \eqref{eq:final2_alpha2} are substituted into the second and third equations of \eqref{eq: Wanggen}. As we neglect higher order terms of order \(\mathcal{O}(\varepsilon^3)\) and higher, the closed Reduced SWME2 (RSWME2) are given by
\begin{align}
\begin{aligned}
    &\partial_th +\partial_x(hu_m) = 0,\\
    &\partial_t(hu_m) +\partial_x\left(hu_m^2T_1^2(h,u_m)+\frac{gh^2}{2}T_2^2(h,u_m)\right)=-\frac{\nu_0 u_m}{\lambda_0}T_3^2(h,u_m),\label{eq: ASWME2}
\end{aligned}
\end{align}
where
\begin{align*}T_1^2(h,u_m)=1+\frac{\varepsilon^2h^2}{45\lambda_0^2},\quad T_2^2(h,u_m) = 1-\frac{\varepsilon^2  h^2}{90\lambda_0^2},\quad
T_3^2(h,u_m) =1-\frac{\varepsilon h}{3\lambda_0}+\frac{\varepsilon^2 h^2}{45\lambda_0^2}.\end{align*} Observe that the RSWME2 in \eqref{eq: ASWME2} share the same structure as the RSWME1 from \eqref{eq: ASWME1}. The only difference arises from the constants that scale the $\mathcal{O}(\varepsilon)$ and $\mathcal{O}(\varepsilon^2)$ terms.

Observe that one can derive the Boussinesq coefficient for the RSWME2, as in Remark \ref{rem:Remark1}. Repeating this procedure will show that the RSWME2 system also has an advection term consistent with the Boussinesq-coefficient-based approximation of the vertical velocity profile. 

\paragraph{Hyperbolicity of the RSWME2}
The system in \eqref{eq: ASWME2} can be expressed in non-conservative form as
\begin{align}
    \partial_t U + A^2(U) \partial_x U = S^2(U),
\end{align}
where \( U = (h, hu_m) \). The system matrix \( A^2(U)\in \mathbb{R}^{2\times2}\) is given by
\begin{align}
    A^2(U) = \begin{pmatrix}
        0 & 1 \\
        -u_m^2 \left(1 - \frac{\varepsilon^2h^2}{45\lambda_0^2}\right) + gh  - \frac{g\varepsilon^2 h^3}{45\lambda_0^2} & 2u_m \left(1 + \frac{\varepsilon^2 h^2}{45\lambda_0^2}\right)
    \end{pmatrix}. \label{eq:SystemMat2}
\end{align}
Additionally, the source term is given by $S^2(U)\in\mathbb{R}^2$ as 
\begin{align}
    S^2(U) = \begin{pmatrix}
        0\\
        -\frac{\nu_0 u_m}{\lambda_0}\left(1-\frac{\varepsilon h}{3\lambda_0}+\frac{\varepsilon^2 h^2}{45\lambda_0^2}\right)
    \end{pmatrix}
\end{align}
To analyse hyperbolicity, we determine whether the eigenvalues $\lambda_{1,2}$ of \( A^2(U) \) from \eqref{eq:SystemMat2} are real and distinct. These eigenvalues are given by
 \begin{equation}
    \lambda_{1,2} = u_m \left(1 + \frac{h^2 \varepsilon^2}{45\lambda_0^2}\right) \pm \sqrt{gh} \sqrt{\frac{\varepsilon^4 h^3 u_m^2}{2025\lambda_0^4g} - \frac{\varepsilon^2 h^2 }{45\lambda_0^2} + \frac{\varepsilon^2 h u_m^2}{15\lambda_0^2g} + 1}.
\end{equation}
For hyperbolicity, the condition that must be satisfied is
\begin{align}
    \left(\frac{\varepsilon^4 h^3 }{2025\lambda_0^4g}+\frac{\varepsilon^2 h}{15\lambda_0^2g}\right)u_m^2 - \frac{\varepsilon^2 h^2 }{45\lambda_0^2} +  1 \geq 0.\label{eq:Sqrt2}   
\end{align}
Similar to analysing the hyperbolicity of the RSWME1 system, we begin by analysing the hyperbolicity for $u_m=0$. For this specific case, hyperbolicity requires that the following condition on $h$ is satisfied
\begin{equation}
    0 < h < \frac{3\sqrt{5}\lambda_0}{\varepsilon}. \label{eq:condition2}
\end{equation}
The terms depending on $u_m$ on the left-hand side of Equation \eqref{eq:Sqrt2} are always positive for $h>0$ and any non-zero value of $u_m$. Satisfying Condition \eqref{eq:condition2} in the case where  \( u_m = 0 \), ensures that hyperbolicity holds for all \( u_m \geq 0 \).

Condition \eqref{eq:condition2} imposes that \(\frac{h}{\lambda} = \frac{h\varepsilon}{\lambda_0} = \mathcal{O}(\varepsilon)\) must remain below a threshold, in line with the scaling (\(\varepsilon \ll 1\)). Consequently, hyperbolicity breaks down when the RSWME2 no longer capture the viscous slip regime.

\begin{theorem}[Hyperbolic regularisation of RSWME2]
\label{thm:hyperbolicity_reg2}
The RSWME2 system, as defined in Equation \eqref{eq: ASWME2}, admits a globally hyperbolic regularisation through the addition of the following term to the left-hand side of its last equation:
\begin{align}
\partial_x\left(\frac{4 \varepsilon^4 h^6g}{6\cdot 180^2\lambda_0^4}\right).\label{eq:regularisation2}
\end{align}
\end{theorem}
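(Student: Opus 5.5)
The plan is to follow the proof of Theorem~\ref{thm:hyperbolicity_reg1} line by line, with the constant $48$ replaced by $45$. First I compute the Jacobian contribution of the regularisation term \eqref{eq:regularisation2}. Since
\[
\partial_x\Bigl(\tfrac{4\varepsilon^4h^6g}{6\cdot180^2\lambda_0^4}\Bigr)=\tfrac{4\varepsilon^4h^5g}{180^2\lambda_0^4}\,\partial_x h,
\]
the term only adds $\frac{4\varepsilon^4h^5g}{180^2\lambda_0^4}$ to the $(2,1)$ entry of $A^2(U)$ in \eqref{eq:SystemMat2}. This gives the modified matrix $A^{2,H}$ of the hyperbolic RSWME2.

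Next I write the eigenvalues of $A^{2,H}$. Abbreviate $a=\frac{\varepsilon^2h^2}{45\lambda_0^2}$. For a $2\times2$ matrix with first row $(0,1)$, the eigenvalues are
\[
\lambda_{1,2}=\tfrac12\operatorname{tr}\pm\sqrt{\tfrac14\operatorname{tr}^2+(A^{2,H})_{21}}.
\]
Here $\tfrac12\operatorname{tr}=u_m(1+a)$. Expanding, the discriminant is
\[
u_m^2\bigl((1+a)^2-(1-a)\bigr)+gh\Bigl(1-a+\tfrac{4\varepsilon^4h^4}{180^2\lambda_0^4}\Bigr)
= u_m^2\,(3a+a^2)+gh\Bigl(1-\tfrac{x}{45}+\tfrac{x^2}{8100}\Bigr),
\]
where $x=\varepsilon^2h^2/\lambda_0^2$. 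This reproduces the square root in the RSWME2 eigenvalue formula, with the extra $\mathcal{O}(\varepsilon^4)$ term added.

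The key observation, and the point where the choice of constant $4/(6\cdot180^2)$ matters, is that the $u_m$-independent bracket is a perfect square:
\[
1-\tfrac{x}{45}+\tfrac{x^2}{8100}=\Bigl(1-\tfrac{x}{90}\Bigr)^2 .
\]
The same structure underlies Theorem~\ref{thm:hyperbolicity_reg1}, where one gets $(1-x/96)^2$ and hence the factor in \eqref{eq:hyperolicsqrtI}. Consequently, for $u_m=0$ the eigenvalues are $\pm\sqrt{gh}\,\bigl(1-\frac{\varepsilon^2h^2}{90\lambda_0^2}\bigr)$, which are real for every $h>0$.

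For general $u_m$, the additional term $u_m^2(3a+a^2)$ is nonnegative for $h>0$, and strictly positive when $u_m\neq0$. Hence the discriminant is nonnegative for all $h>0$ and all $u_m$. So the eigenvalues are always real and the regularisation removes the restriction \eqref{eq:condition2}.

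The main obstacle is distinctness of the eigenvalues. The discriminant vanishes exactly when $u_m=0$ and $h=3\sqrt{10}\,\lambda_0/\varepsilon$. There $A^{2,H}=\begin{pmatrix}0&1\\0&0\end{pmatrix}$, which is not diagonalisable. So at this single isolated state, which lies far outside the viscous-slip regime $h/\lambda=\mathcal{O}(\varepsilon)$, only weak hyperbolicity holds. I would state global hyperbolicity in the sense of real eigenvalues, as the paper does in Theorem~\ref{thm:hyperbolicity_reg1}. I would then remark that strict hyperbolicity holds everywhere except at this point, and that if strictness is needed there, the constant in \eqref{eq:regularisation2} can be increased slightly, which makes the bracket strictly positive. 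Finally, I note that the added term is $\mathcal{O}(\varepsilon^4)$, so it does not affect the $\mathcal{O}(\varepsilon^2)$ accuracy of the RSWME2.
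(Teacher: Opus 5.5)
Your proposal is correct and follows the paper's route. The paper proves this theorem by repeating the argument of Theorem~\ref{thm:hyperbolicity_reg1}: add $\frac{4\varepsilon^4h^5g}{180^2\lambda_0^4}$ to the $(2,1)$ entry, write down the eigenvalues, treat $u_m=0$, and use that the $u_m^2$-terms are nonnegative.

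You depart from the paper only on distinctness, and there you are right and the paper is not. The paper's argument asserts that the radicand is strictly positive at $u_m=0$ for all $h>0$, and concludes that the eigenvalues are real and distinct. In fact the radicand is exactly $\bigl(1-\frac{\varepsilon^2h^2}{90\lambda_0^2}\bigr)^2$. The paper's own $N=1$ formula \eqref{eq:hyperolicsqrtI} already shows the analogous defect, since it vanishes at $h=4\sqrt{6}\,\lambda_0/\varepsilon$. So at $(h,u_m)=(3\sqrt{10}\,\lambda_0/\varepsilon,0)$ the regularised matrix is the nilpotent Jordan block you identified.

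With the constant exactly as in \eqref{eq:regularisation2}, what can be proved is therefore real eigenvalues everywhere, and strict hyperbolicity everywhere except that single state. Your fix is valid: any coefficient strictly larger than $4/(6\cdot180^2)$ gives the $u_m$-independent quadratic in $\varepsilon^2h^2/\lambda_0^2$ a negative discriminant. That yields genuine global strict hyperbolicity at the same $\mathcal{O}(\varepsilon^4)$ cost.

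One small correction to your wording: the paper does not state Theorem~\ref{thm:hyperbolicity_reg1} ``in the sense of real eigenvalues''. It explicitly claims distinct eigenvalues, so your version corrects the paper rather than matching it.
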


\begin{proof}
   This result follows analogously to the proof of Theorem \ref{thm:hyperbolicity_reg1}.
\end{proof}

\subsection{Asymptotic expansion for SWME with arbitrary $N$}
\label{s:Ngensection}
This section presents the derivation of solutions for an arbitrary number of moments, denoted by \(N\). The order analysis for arbitrary $N$ employs asymptotic expansions of the form
\begin{equation}
\alpha_{N,i} = \alpha_{N,i}^{(0)} + \varepsilon \alpha_{N,i}^{(1)} + \varepsilon^2 \alpha_{N,i}^{(2)} + \mathcal{O}(\varepsilon^3), \quad i = 1, \dots, N, \label{eq:alphaNsol}
\end{equation}
and will be substituted in \eqref{eq:IntroSWME1}--\eqref{eq:IntroSWME3}. Note that we use the notation $\alpha_{N,i}$, for $i=1,\dots,N$ to denote the moment variable $\alpha_i$ in a SWME of order $N$. In the remainder of this subsection we derive the closure relations for $\alpha_{N,i}$, which may also vary depending on $N$.

\paragraph{The $\mathcal{O}(\varepsilon^{-1})$ approximation of the SWME}
Substitution of the asymptotic expansions \eqref{eq:alphaNsol} for \(i = 1, \ldots, N\) into Equation \eqref{eq:IntroSWME3} yields the \(\mathcal{O}(\varepsilon^{-1})\)-order term
\begin{align}
    \frac{\nu_0}{h}\left(\sum_{j=1}^{N}C_{ij}\alpha_{N,j}^{(0)}\right)=0.\label{eq:zerosys}
\end{align}
The system \eqref{eq:zerosys} in matrix-vector form is
\begin{align}
    \boldsymbol{C}_{N} \boldsymbol{\alpha}_N^{(0)}=\mathbf{0},\label{eq:MatrixC1}
\end{align}  
where $\boldsymbol{C}_{N} \in \mathbb{R}^{N\times N}$ is as defined in \eqref{eq:leadingorder2} and  \(\boldsymbol{\alpha}_N^{(0)}\) is the vector whose \(j\)-th component corresponds to \(\alpha_{N,j}^{(0)}\).
As shown in \cite{Huang2022}, the matrix \(\boldsymbol{C}_{N}\) is invertible, and as a result the solution is
\begin{equation}
\alpha_{N,i}^{(0)} = 0, \quad i = 1, \dots, N.
\label{eq:trivial_solution}
\end{equation}

\paragraph{The $\mathcal{O}(\varepsilon^{0})$ approximation of the SWME}
 Substituting \eqref{eq:trivial_solution} into the $\mathcal{O}(\varepsilon^0)$-order system \eqref{eq:IntroSWME3} yields $N$ closure relations for the coefficient variable $\alpha_{N,j}^{(1)}$:
\begin{align*}
    \sum_{j=1}^N C_{ij} \alpha_{N,j}^{(1)}= -\frac{u_mh}{\lambda_0},
\end{align*}
where $i=1,\ldots,N$.
In matrix-vector form, the solution for $\boldsymbol{\alpha}_N^{(1)} \in \mathbb{R}^N$ can be expressed as
\begin{align}
    \boldsymbol{C}_{N} \boldsymbol{\alpha}_N^{(1)}= -\mathbf{1}\frac{u_mh}{\lambda_0}, \label{eq:MatrixC2}
\end{align}
where \(\boldsymbol{C}_{N} \in \mathbb{R}^{N\times N}\) has entries \(\boldsymbol{C}_{Nij} = C_{ij}\), \(\boldsymbol{\alpha}_N^{(1)}\) is the vector of first-order coefficients with the following entries \((\boldsymbol{\alpha}_N^{(1)})_i = \alpha_{N,i}^{(1)}\) (for $i=1,\dots,N$) and $\mathbf{1}\in \mathbb{R}^N$ is the vector of ones.
Rewriting Equation \eqref{eq:MatrixC2} yields closure relations for the coefficients \(\alpha_{N,i}^{(1)}\) (\(i = 1, \dots, N\)):
\begin{align}
\alpha_{N,i}^{(1)} = -\frac{u_m h}{\lambda_0} \sum_{j=1}^N (\boldsymbol{C}_{N}^{-1})_{ij}.\label{eq:MatrixC22}
\end{align}
Thus, the closure relation \(\alpha_{N,i}^{(1)}\), for $i=1,\dots,N$ can be expressed as
\begin{align}
\alpha_{N,i}^{(1)} =-\frac{1}{\lambda_0} \widetilde{B}_i^{(N)} \, u_m h,\label{eq:Constant1}
\end{align}
where \(\widetilde{B}_i^{(N)}\in \mathbb{R}\) is a constant independent of \(u_m\), \(h\), \(t\), and \(x\), and is given by
\begin{align}
\widetilde{B}_i^{(N)} = \sum_{j=1}^N (\boldsymbol{C}_{N}^{-1})_{ij}. \label{eq:B_i_expression}
\end{align}

\paragraph{The $\mathcal{O}(\varepsilon^{1})$ approximation of the SWME}
\label{section2.3}

We substitute the solutions from \eqref{eq:trivial_solution} and \eqref{eq:MatrixC22} into the \(\mathcal{O}(\varepsilon)\)-order terms of Equation \eqref{eq:IntroSWME3}.
Solving for \(\alpha^{(2)}_{i,N}\)(\(i = 1, \ldots, N\)) yields the following closure relations for \(\boldsymbol{\alpha}_N^{(2)} \in \mathbb{R}^N\)
\begin{align}
    \widetilde{\boldsymbol{C}}_N\boldsymbol{\alpha}_N^{(2)} = -\frac{g}{4\nu_0\lambda_0}(\boldsymbol{\widetilde{I}}^N_1 \boldsymbol{1})\partial_x(h^4)- \frac{1}{\lambda_0^2}(\boldsymbol{\widetilde{I}}^N_2\boldsymbol{1}) u_mh^2, \label{eq:MatrixC3}
\end{align}
where $\widetilde{\boldsymbol{C}}_N\in\mathbb{R}^{N\times N}$, $\boldsymbol{\widetilde{I}}_1^N \in \mathbb{R}^{N\times N}$ and $\boldsymbol{\widetilde{I}}_2^N \in \mathbb{R}^{N\times N}$ are matrices with the entries for $i,j = 1,\ldots,N$ given by
\begin{align}
 \widetilde{\boldsymbol{C}}_{Nij}&=(2i+1)C_{ij}, \\
    (\boldsymbol{\widetilde{I}}_1^N)_{ij} &= (\boldsymbol{C}_N^{-1})_{ij},\\
    (\boldsymbol{\widetilde{I}}_2^N)_{ij} &=(\boldsymbol{C}_N^{-1})_{ij}-(2i+1)\Big(\displaystyle\sum_{k,l=1}^N (\boldsymbol{C}_N^{-1})_{kl}\Big)\delta_{ij}.
\end{align}   
For \(i = 1, \dots, N\), rewrite Equation \eqref{eq:MatrixC3} as
\begin{align}
\alpha_{N,i}^{(2)} = \frac{1}{\lambda_0^2}\widetilde{D}_i^{(N)} \, u_m h^2 -\frac{g}{4 \nu_0 \lambda_0} \widetilde{F}_i^{(N)} \, \partial_x(h^4),\label{eq:Constant2}
\end{align}
where \(\widetilde{D}_i^{(N)}\) and \(\widetilde{F}_i^{(N)} \in \mathbb{R}\) are constants independent of \(u_m\), \(h\), \(x\), and \(t\). The explicit expressions for \(\widetilde{D}_i^{(N)}\) and \(\widetilde{F}_i^{(N)}\) are
\begin{align}
\widetilde{D}_i^{(N)} &= - \sum_{j=1}^N (\widetilde{\boldsymbol{C}}_N^{-1} \boldsymbol{C}_N^{-1})_{ij} + \left( \sum_{k,l=1}^N (\boldsymbol{C}_N^{-1})_{k,l} \right) \left( \sum_{j=1}^N (\boldsymbol{C}_N^{-1})_{ij} \right), \label{eq:D_i_expression} \\
\widetilde{F}_i^{(N)} &=  \sum_{j=1}^N (\widetilde{\boldsymbol{C}}_N^{-1} \boldsymbol{C}_N^{-1})_{ij}. \label{eq:F_i_expression}
\end{align}

\paragraph{The closed RSWME model for arbitrary $N$}
Substituting \eqref{eq:trivial_solution}, \eqref{eq:Constant1} and \eqref{eq:Constant2} into \eqref{eq:alphaNsol}, we obtain the closure relation for $\alpha_{N,j}$ ($j= 1,\dots,N$) as
\begin{align}
        \alpha_{N,j} = -\frac{\varepsilon}{\lambda_0}\widetilde{B}_j^{(N)} u_m h\color{black} + \varepsilon^2\left( \frac{1}{\lambda_0^2}\widetilde{D}_j^{(N)} u_m h^2 -\frac{g}{4 \nu_0 \lambda_0} \widetilde{F}_j^{(N)} \partial_x(h^4)\right) + \mathcal{O}(\varepsilon^3).\label{eq:approxalphaj}
\end{align}
Substituting \eqref{eq:approxalphaj} into \eqref{eq:IntroSWME2} and ignoring the terms $\mathcal{O}(\varepsilon^3)$ and $\mathcal{O}(\varepsilon^4)$ produces a closed RSWME system that takes the following form
\begin{align}
    \partial_tU
    + A^N(U)\partial_xU = S^N(U).
    \label{eq:Closed_form_RSWME_sys}
\end{align}
The system matrix $A^N(U) \in \mathbb{R}^{2\times2}$ of \eqref{eq:Closed_form_RSWME_sys} is defined as
        \begin{align}
       A^N= \begin{pmatrix}
        0 & 1\\
        -u_m^ 2\left(1 - \frac{\varepsilon^2 h^2}{\lambda_0^2}\Gamma^N\right)+gh\left(1-\frac{\varepsilon^2 h^2}{\lambda_0\nu_0} \Phi^N\right)&  2u_m\displaystyle{\left(1+\frac{\varepsilon^2h^2}{\lambda_0^2}  \Gamma^N\right)}
    \end{pmatrix},\label{eq:Closed_form_RSWME}
        \end{align}
where the constants $\Gamma^N$ and $\Phi^N \in\mathbb{R}$ are given by
\begin{align}
    \Gamma^N = \displaystyle{ \sum_{j=1}^N \frac{\left(\widetilde{B}_j^{(N)}\right)^2}{2j+1}}, \quad
    \Phi^N = \displaystyle{\sum_{j=1}^N \widetilde{F}_j^{(N)}}.\label{eq:GammaNPhiN}
\end{align} 
\\
The source term $S^N(U)\in\mathbb{R}^2$ in \eqref{eq:Closed_form_RSWME_sys} is 
\begin{align}
    {S}^N (U)=
    \begin{pmatrix}
        0 \\
        -\frac{\nu_0}{\lambda_0}u_m \left( 1 - \frac{\varepsilon}{\lambda_0} \Omega^N  h + \frac{\varepsilon^2}{\lambda_0^2} \Lambda^N h^2 \right)
    \end{pmatrix},\label{eq:sourcetermU}
\end{align}
where the constants $\Omega^N$ and $\Lambda^N \in\mathbb{R}$ are defined as
\begin{align}\Omega^N =\displaystyle{\sum_{i=1}^N \widetilde{B}_i^{(N)}},\quad\Lambda^N = \displaystyle{\sum_{i=1}^N \widetilde{D}_i^{(N)}}.\label{eq:OmegaNLambdaN}\end{align}
\\
We further present the following theorem regarding the RSWME of order \( N \).

\begin{theorem}[Identical closed RSWME systems]
\label{thm:identical}
    For \( N \geq 2 \), all RSWME systems are identical.
\end{theorem}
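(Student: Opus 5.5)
The plan is to show that the four scalar constants that enter the closed system \eqref{eq:Closed_form_RSWME_sys}, namely $\Gamma^N,\Phi^N$ from \eqref{eq:GammaNPhiN} and $\Omega^N,\Lambda^N$ from \eqref{eq:OmegaNLambdaN}, take the same values for every $N\ge 2$. The system matrix $A^N$ and the source $S^N$ depend on $N$ only through these constants, so this suffices. All four are built from the two vectors $\boldsymbol{C}_N^{-1}\mathbf{1}$ and $\widetilde{\boldsymbol{C}}_N^{-1}\boldsymbol{C}_N^{-1}\mathbf{1}$. The core of the proof is an explicit solution of $\boldsymbol{C}_N\boldsymbol{x}=\mathbf{1}$ whose support is $\{1,2\}$ for all $N\ge2$.

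\textbf{Step 1: structure of $\boldsymbol{C}_N$.} Write $\phi_j(\zeta)=P_j(1-2\zeta)$ with $P_j$ the standard Legendre polynomials. Then use the classical identity
\[
\int_{-1}^1 P_i'P_j'\,\mathrm{d}x=\min(i,j)\bigl(\min(i,j)+1\bigr)
\]
for $i+j$ even, and $0$ for $i+j$ odd. After the change of variables this shows that the entries of $\boldsymbol{C}_N$ have a checkerboard structure, $(\boldsymbol{C}_N)_{ij}=c\,m(m+1)$ with $m=\min(i,j)$ when $i+j$ is even and $0$ otherwise, for a fixed constant $c$. In the normalisation of \eqref{eq:SWME_viscous_system} one has $(\boldsymbol{C}_N)_{11}=4$, so $c=2$; I will fix $c$ by this comparison. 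In particular $\boldsymbol{C}_N$ is symmetric, and its entries do not depend on $N$. I expect this step to be the main obstacle. The authors' definition of $C_{ij}$ in \eqref{eq:Cconst} carries a factor $(2i+1)$ that seems inconsistent with the explicit $N=1,2$ systems, so I would verify the normalisation directly against $C_{11}=4$ and $C_{22}=12$ and check symmetry before proceeding.

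\textbf{Step 2: the vector $\boldsymbol{C}_N^{-1}\mathbf{1}$.} Take the candidate $\boldsymbol{x}=(1/(2c),\,1/(6c),\,0,\dots,0)^T$. For an odd row $i$, only $j=1$ contributes, giving $c\cdot1\cdot2\cdot x_1=1$. For an even row $i$, only $j=2$ contributes, giving $c\cdot2\cdot3\cdot x_2=1$. By invertibility of $\boldsymbol{C}_N$ \cite{Huang2022}, this is the unique solution. Hence for every $N\ge2$
\[
\widetilde B^{(N)}=\Bigl(\tfrac14,\tfrac1{12},0,\dots,0\Bigr),
\]
which agrees with \eqref{eq:viscous_contribution21}. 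Consequently $\Omega^N=\tfrac13$ and $\Gamma^N=\tfrac{1}{48}+\tfrac{1}{720}=\tfrac1{45}$ are independent of $N$. For $N=1$ the second component is missing, which explains why RSWME1 differs.

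\textbf{Step 3: $\Phi^N$ and $\Lambda^N$.} Write $\widetilde{\boldsymbol{C}}_N=\boldsymbol{D}\boldsymbol{C}_N$ with $\boldsymbol{D}=\mathrm{diag}(2i+1)$. Then
\[
\Phi^N=\mathbf{1}^T\boldsymbol{C}_N^{-1}\boldsymbol{D}^{-1}\boldsymbol{C}_N^{-1}\mathbf{1}=\boldsymbol{x}^T\boldsymbol{D}^{-1}\boldsymbol{x},
\]
using the symmetry of $\boldsymbol{C}_N$ from Step 1. Since $\boldsymbol{x}$ is supported on $\{1,2\}$, this equals $x_1^2/3+x_2^2/5$ for all $N\ge2$. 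Summing \eqref{eq:D_i_expression} over $i$ gives $\Lambda^N=-\Phi^N+(\Omega^N)^2$, which is therefore also $N$-independent. I would check these values against the RSWME2 coefficients $\Phi$-term $\tfrac1{90}\cdot2$ and $\Lambda=\tfrac1{45}$ as a consistency test. This establishes that $A^N$ and $S^N$ coincide for all $N\ge2$.
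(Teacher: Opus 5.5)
Your proof is correct and follows the paper's argument essentially step for step. The paper's lemma checks that $\boldsymbol{b}_N=(\tfrac{1}{4},\tfrac{1}{12},0,\dots,0)^{\rm T}$ solves $\boldsymbol{C}_N\boldsymbol{b}_N=\boldsymbol{1}$, using the checkerboard formula $C_{ij}=2m(m+1)$ with $m=\min(i,j)$ (cited from Huang et al.\ rather than derived from the Legendre-derivative identity as you do). It then writes $\Phi^N=\widetilde{\boldsymbol{b}}_N^{\rm T}\boldsymbol{b}_N$ with $\widetilde{\boldsymbol{b}}_N=(\widetilde{\boldsymbol{C}}_N^{-1})^{\rm T}\boldsymbol{1}=(\tfrac{1}{3}b_1,\tfrac{1}{5}b_2,0,\dots)^{\rm T}$, which is exactly your $\boldsymbol{x}^{\rm T}\boldsymbol{D}^{-1}\boldsymbol{x}$ and tacitly uses the same symmetry of $\boldsymbol{C}_N$, and closes with $\Lambda^N=-\Phi^N+(\Omega^N)^2$.

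Your normalisation concern is justified: the argument needs $C_{ij}=\int_0^1\partial_\zeta\phi_i\,\partial_\zeta\phi_j\,\mathrm{d}\zeta$ without the factor $(2i+1)$. Be aware that your proposed sanity check will give $\Lambda^N=\tfrac{4}{45}$, not $\tfrac{1}{45}$. This value agrees with the paper's own $N=2$ closures ($\tfrac{3}{48}+\tfrac{19}{720}=\tfrac{4}{45}$), so the mismatch is a misprint in the printed $T_3^2$, not an error in your proof.
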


\begin{proof}
The detailed proof is provided in \ref*{s:appenB}.
\end{proof}

From Theorem \ref{thm:identical} we know that for all $N\geq2$ the system matrix $A^N \in \mathbb{R}^{2 \times 2}$ from \eqref{eq:Closed_form_RSWME} and the source term $S^N \in \mathbb{R}^{2}$ from \eqref{eq:sourcetermU} remain unchanged. As a result, using the numerically computed values of $h$ and $u_m$ from the RSWME2 system, the velocity profile \eqref{eq:velocity_expansion} can be obtained for arbitrary $N$ by substituting these solutions into the closure relations for $\alpha_{N,i}$, for $i=1,...,N$ from \eqref{eq:Constant2} in a post processing step. 

In theory, the RSWME achieve a substantial reduction in runtime relative to the SWME, which uses a much larger system of equations, scaling with dimension $N+2$.

\paragraph{Hyperbolicity of the RSWME for arbitrary \( N \)}
As a consequence of Theorem \ref{thm:identical}, the hyperbolicity conditions for arbitrary \( N \geq 2 \) are identical to those for the case \( N = 2 \), as specified by condition \eqref{eq:condition2}. For arbitrary \( N \), the model is hyperbolic, provided that the assumptions \( \lambda = \mathcal{O}(\varepsilon^{-1}) \) and \( \nu= \mathcal{O}(\varepsilon^{-1}) \) are satisfied, where $\varepsilon\ll1$ is a small scaling. Moreover, the hyperbolic regularisation approach for \( N = 2 \), as presented in Theorem \ref{thm:hyperbolicity_reg2}, can be extended to ensure global hyperbolicity for arbitrary \( N \). Hence, for arbitrary $N\geq2$, global hyperbolicity could be obtained by adding the regularisation term from \eqref{eq:regularisation2} to the left-hand side of \eqref{eq:Closed_form_RSWME_sys}.

\section{Numerical simulation}
In this section, we first present the time-integration scheme needed to handle the stiff source term for the viscous slip SWME.
We then present three numerical tests: a wave with sharp gradients (based on \cite{Kowalski}), a smooth sine wave, and an initial square root velocity profile.


To examine the accuracy and computational cost of the RSWME and its closure
relations for the moment variables, we conduct numerical experiments,
where the SWME are used as a reference solution.

\subsection{Numerical methods}
For large scalings of slip length $\lambda$ and viscosity $\nu$ as given by \eqref{eq:blah}, the source term $S(U)$ in the SWME system \eqref{eq:SystemPDE} is of order $\mathcal{O}(\varepsilon^{-1})$. This scaling can introduce stiffness, potentially leading to numerical instabilities. To avoid these numerical instabilities, operator splitting is employed in \cite{Huang2022}, allowing the source term to be integrated implicitly while maintaining stability in the numerical scheme.

In \cite{Huang2022}, the SWME model \eqref{eq:IntroSWME1}-\eqref{eq:IntroSWME3} is split into two parts as follows
\begin{align}
    \partial_tU+A(U)\partial_xU&=0,\label{eq:Systematrix}\\
    \partial_tU&=S(U).\label{eq:sourceterm}
\end{align}
To solve \eqref{eq:Systematrix}, we use a path-conservative finite volume scheme based on the polynomial viscosity methods developed for non-conservative products described in \cite{Canestrelli2009, CastroDaz2012}, with linear paths and polynomial viscosity matrix. For the source term in \eqref{eq:sourceterm}, an implicit backward Euler scheme is employed, resulting in
\begin{align}
    \frac{U^{n+1}-U^n}{\Delta t}=S(U^{n+1}), \label{eq:implicitsystem}
\end{align}
where \( U^n \) denotes the vector \( U \) evaluated at the discrete time \( t = t^n \), such that \( t^n \) corresponds to the \( n \)-th time step. Because $S$ is linear, the system \eqref{eq:implicitsystem} can be solved directly by inverting a matrix, similar to \cite{Huang2022}.

Under the assumption that $\lambda = \mathcal{O}(\varepsilon^{-1})$ and $\nu = \mathcal{O}(\varepsilon^{-1})$, the source terms corresponding to SWE \eqref{eq:SWEconv} and the RSWME \eqref{eq:Closed_form_RSWME_sys} are of order $\mathcal{O}(1)$. Consequently, these source terms do not exhibit stiffness, thereby permitting the choice of an explicit numerical scheme for the source term. In this work, the explicit scheme to solve \eqref{eq:sourceterm} within the SWE and RSWME model is given by
\begin{align}
\frac{U^{n+1} - U^n}{\Delta t} = S(U^n).
\end{align}

\subsection{Test I wave with a sharp height gradient}
To assess the accuracy of the RSWME1 model, we conduct a wave test with a sharp height gradient and compare the results with those obtained from the SWME1 model. The configuration of this test is provided in Table \ref{tab:test1}.

For this analysis, we adopt the  wave test from \cite{Kowalski}, while modifying only the values of the viscosity \(\nu\) and slip length \(\lambda\) such that \eqref{eq:blah} holds. We evaluate the numerical results of the RSWME model for various values of \(\varepsilon \in \{0.01, 0.1, 1\}\). The smallest value, $\varepsilon = 0.01$, aligns with the equilibrium assumptions where $\varepsilon \ll 1$. The intermediate value, $\varepsilon = 0.1$, corresponds to a moderate deviation from equilibrium, while the largest value, $\varepsilon = 1$, represents a significant shift away from equilibrium, invalidating the assumption that $\varepsilon \ll 1$.

In Equation \eqref{eq:blah}, we set \(\lambda_0 = 1\) and \(\nu_0 = 1\).
Consequently, in our simulations, we use \(\lambda = \nu = \varepsilon^{-1}\).

For the spatial discretisation, we employ a PRICE scheme within the framework of polynomial viscosity methods \cite{Canestrelli2009, CastroDaz2012}.

\begin{table}[t!]
\centering
\caption{Numerical setup for wave with sharp gradient simulation.}
\label{tab:test1}
\begin{tabular}{|l||c|}
\hline
viscosity            &  $\nu\in\{100,10,1\}$ \\
slip length           &$\lambda\in\{100,10,1\}$ \\
temporal domain & \( t\in[0,2] \) \\
spatial domain       & periodic \( x \in [-1, 1] \) \\
spatial resolution & $n_x=1000$\\
initial height             & $h(x,0) =  1 +\exp{(3\cos(\pi(x+0.5))-4.0)}$ \\
initial velocity profile        & \( u(x, 0, \xi) = 0.5\xi \) \\
CFL condition           & 0.7 \\
numerical scheme            & PRICE \cite{Canestrelli2009, CastroDaz2012} \\
\hline
\end{tabular}
\end{table}

Numerical experiments were conducted for the various values of $\varepsilon$ over a time span of two seconds. In Figure \ref{fig:test1a}, the numerical solutions for the average velocity $u_m$ and height $h$ from the SWE, RSWME1, and SWME1 systems are presented.
For $\varepsilon = 0.01$ and $\varepsilon = 0.1$, the SWE, RSWME1, and SWME1 solutions are indistinguishable. In the limit as $\varepsilon \to 0$, the RSWME1 align with the SWE. According to \cite{Huang2022}, $\lambda \to \infty$ represents an equilibrium where the moment coefficients vanish. Since $\lambda \to \infty$ as $\varepsilon \to 0$, the moment variables in the SWME1 model become negligible for small $\varepsilon$.

 For $\varepsilon=1$, there are significant differences between the models. In particular, The RSWME1 provide a closer approximation to the average velocity $u_m$ of the SWME1 than the SWE do. 

The relative $L_1$-errors for the SWE and RSWME1 models, computed using SWME1 as the reference, are given by
\begin{align*}
    E_{\text{SWE},h} &= \frac{\|h_{\text{SWE}} - h_{\text{SWME}}\|_1}{\|h_{\text{SWME}}\|_1}, &
    E_{\text{SWE},u_m} &= \frac{\|u_{m,\text{SWE}} - u_{m,\text{SWME}}\|_1}{\|u_{m,\text{SWME}}\|_1}, \\
    E_{\text{RSWME},h} &= \frac{\|h_{\text{RSWME}} - h_{\text{SWME}}\|_1}{\|h_{\text{SWME}}\|_1}, &
   E_{\text{RSWME},u_m} &= \frac{\|u_{m,\text{RSWME}} - u_{m,\text{SWME}}\|_1}{\|u_{m,\text{SWME}}\|_1}.
\end{align*}

In Table \ref{tab:ResTest1}, the RSWME1 yield a solution for height $h$ and average velocity $u_m$ that better approximates the SWME1 compared to the SWE for $\varepsilon \in \{0.01, 0.1, 1\}$. 

\begin{figure}[t!]
    \centering
    \includegraphics[width=\textwidth, keepaspectratio]{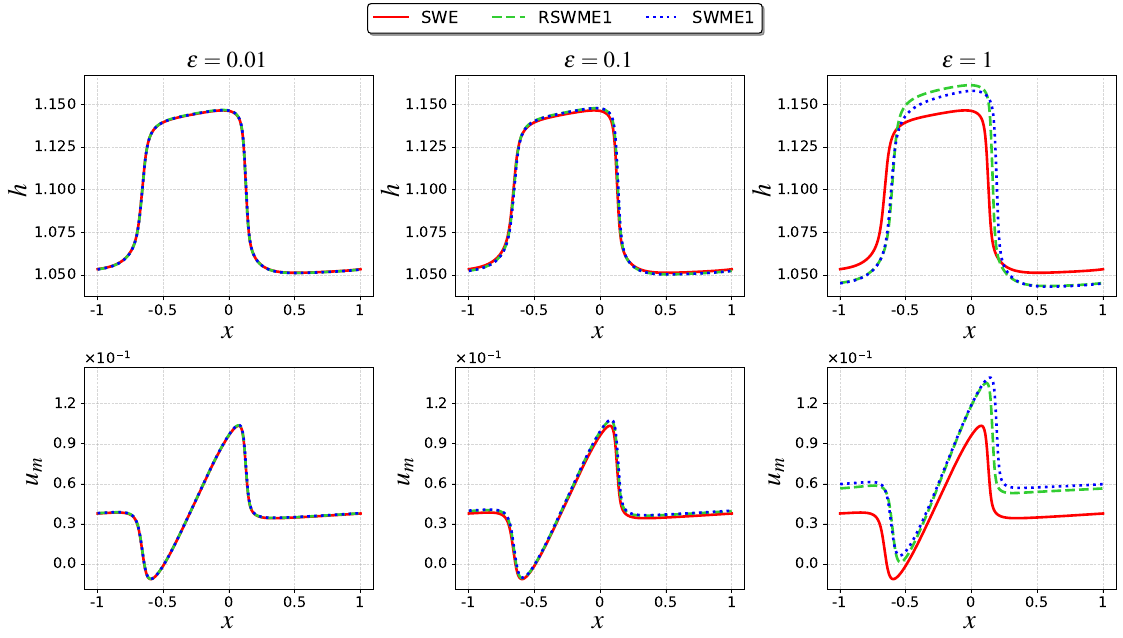}
    \caption{Wave with a sharp gradient test comparing the SWE (red, solid), RSWME1 (green, dashed), and SWME1 (blue, dotted) models at \(t=2.0\). The results are presented for the height \(h\) (top row) and velocity \(u_m\) (bottom row) across three values of \(\varepsilon\): \(\varepsilon = 0.01\) (left), \(\varepsilon = 0.1\) (middle), and \(\varepsilon = 1\) (right).}
    \label{fig:test1a}
\end{figure}

By substituting the numerically computed solutions for $h$ and $u_m$ into the closure relation \eqref{eq:final1_alpha1}, we compute the coefficient $\alpha_1$. The closure relation in \eqref{eq:final1_alpha1} involves the spatial derivative of $h^4$ with respect to $x$, which we approximate numerically using a Godunov scheme. Experiments with various FVM schemes to discretise $\partial_x(h^4)$ revealed no significant differences. Thus, comparative analyses are omitted for conciseness.

In Figure \eqref{fig:Moment1a}, the numerical solution for $\alpha_1$ from the SWME1 model, as well as the approximated solution for $\alpha_1$ via the closure relation \eqref{eq:final1_alpha1}, are shown. One can observe that for $\varepsilon = 0.01$, both solutions converge to zero. For $\varepsilon = 0.1$, both the SWME1 and RSWME1 solutions equally deviate from zero, which indicates high accuracy of the RSWME1 model. However, for $\varepsilon = 1$, large peaks occur in the approximated solution via the RSWME1 solution. This can be explained as follows, the closure relation in \eqref{eq:final1_alpha1} includes a term of the form $-\varepsilon^2\frac{\partial_x(h^4)}{4g}$. When $\varepsilon = 1$ and the solution features sharp gradients in $h$, this term can lead to significant peaks in the resulting solution for $\alpha_1$. To investigate these deviations, as observed in Figure \ref{fig:Moment1a}, we examine the numerical derivative of $h^4$ with respect to $x$ for the SWE, RSWME, and SWME models. Figure \ref{fig:test1aDh} displays this derivative across different values of $\varepsilon$.
For $\varepsilon = 1$, both the SWME and RSWME yield significantly larger values of $\partial_x(h^4)$ compared to the SWE. Because this derivative is further scaled by $\varepsilon^2$, the resulting peaks are not damped as in the case $\varepsilon \ll 1$.
To observe for which cases these peaks do not occur in the reconstruction of the moments, we present a smooth case with a smaller gradient of $h$ in the next subsection.

\begin{figure}[t!]
    \centering
    \includegraphics[width=\textwidth, keepaspectratio]{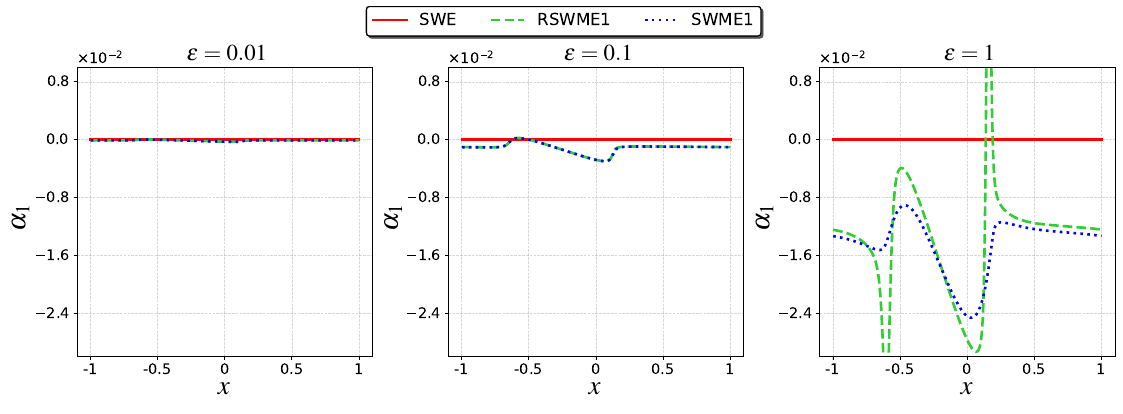}
    \caption{Wave with a sharp gradient test comparing the SWE (red, solid), RSWME1 (green, dashed), and SWME1 (blue, dotted) models at \(t=2.0\). The results are presented for the moment coefficient $\alpha_1$ across three values of \(\varepsilon\): \(\varepsilon = 0.01\) (left), \(\varepsilon = 0.1\) (middle), and \(\varepsilon = 1\) (right).}
    \label{fig:Moment1a}
\end{figure}

\begin{figure}[t!]
    \centering
    \includegraphics[width=\textwidth, keepaspectratio]{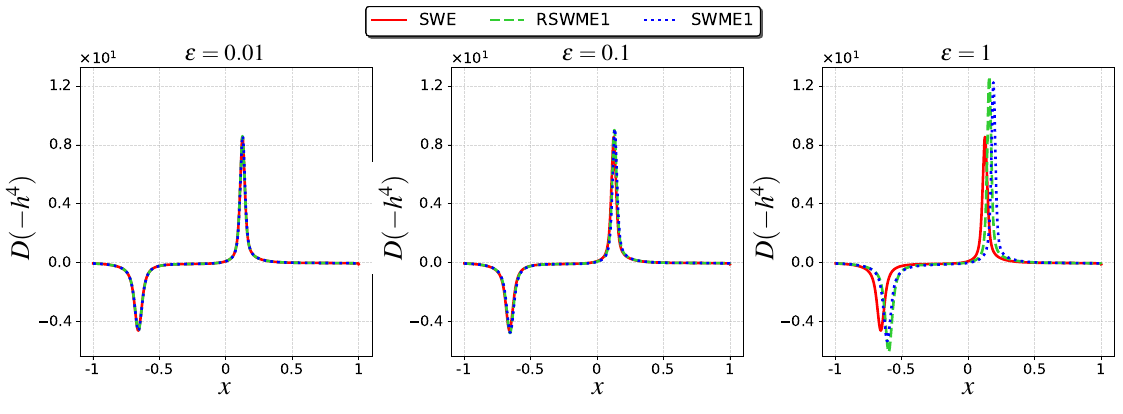}
    \caption{Wave with a sharp gradient test comparing the SWE (red, solid), RSWME1 (green, dashed), and SWME1 (blue, dotted) models at \(t=2.0\). The post processed $\partial_x(h^4)$-value computed with Godunov reconstruction is shown across three values of \(\varepsilon\): \(\varepsilon = 0.01\) (left), \(\varepsilon = 0.1\) (middle), and \(\varepsilon = 1\) (right).}
    \label{fig:test1aDh}
\end{figure}

\begin{table}[t!]
        \centering
          \caption{Relative $L_1$-error ($t=2.0$, $N=1$) for the wave with a sharp gradient.}
        \begin{tabular}{l|ccc}
            \toprule
            & ${\varepsilon} = 0.01$ & ${\varepsilon} = 0.1$ & ${\varepsilon} = 1$ \\
            \midrule
            SWE, in $h$ & 1.4600e-04 &1.2321e-03&1.0933e-02 \\
            RSWME1, in $h$ & 1.1144e-04 & 2.5440e-04&   2.9279e-03\\
            \midrule
            SWE, in $u$ & 7.3797e-03 & 5.6935e-02&  3.7623e-01 \\
            RSWME1, in $u$ &2.7112e-03& 1.0151e-02&6.4229e-02 \\
            \hline
        \end{tabular}
        \label{tab:ResTest1}
\end{table}

\subsection{Test II: smooth sine wave}

To investigate whether selecting an initial condition with smoother gradients leads to more accurate moment solutions via the closure relations, we present a smooth sine wave test case in Table 3.
\begin{table}[h!]
\centering
\caption{Numerical setup for smooth sine wave simulation.}
\begin{tabular}{|l||c|}
\hline
kinematic viscosity            & $\nu\in\{100,10,1\}$ \\
slip length           &$\lambda\in\{100,10,1\}$ \\
temporal domain & \( t\in[0,2] \) \\
spatial domain       & periodic \( x \in [-1, 1] \) \\
spatial resolution & $n_x=1000$\\
initial height             & $h(x,0) =  1 -0.1\sin{(\frac{\pi x}{2})}^2$ \\
initial velocity profile         & \( u(x, 0, \xi) = 0.5\xi \) \\
CFL condition           & $0.7$ \\
numerical scheme            & PRICE \cite{Canestrelli2009, CastroDaz2012} \\
\hline
\end{tabular}\label{tab:test2}
\end{table}

Numerical tests for the smooth sine wave cases were conducted for the same values of $\varepsilon$ over a duration of two seconds. The simulations focussed on the case where $N = 1$, employing the SWE, RSWME1, and SWME1. In Figure \ref{fig:test1b}, the numerical solutions for the average velocity $u_m$ and height $h$ are presented.
For $\varepsilon = 0.01$ and $\varepsilon = 0.1$, the SWE, RSWME1, and SWME1 solutions for height $h$ are very similar. For $\varepsilon=1$, the RSWME1 yield a numerical solution that is closer to the SWME1 compared to the SWE for both height $h$ and average velocity $u_m$. Thus, the RSWME1 are more accurate than the SWE. 

In Table 4, the relative \(L_1\)-error for the RSWME1 and SWE solutions is shown with respect to the SWME1 as the reference solution. The results show that the RSWME1 provide a more accurate approximation of the SWME1 compared to the SWE. The smallest relative improvement of the RSWME1 over SWE when comparing their relative errors is 64\%, which is observed in the variable $u_m$ for $\varepsilon=0.01$. The largest relative improvement of 88\% occurs in variable $h$ for $\varepsilon=0.1$. \newline

In Figure \ref{fig:test1balpha} the numerical solutions for the moment variable $\alpha_1$ are shown. In this figure the erroneous peaks for $\varepsilon=1$ shown in the sharp gradient case from Figure \ref{fig:Moment1a} are not present. Hence, for a test case with smaller gradients of $h^4$ the closure relations for the moment variables are accurate up to a larger value of $\varepsilon$.

\begin{figure}[t!]
    \centering
    \includegraphics[width=\textwidth, keepaspectratio]{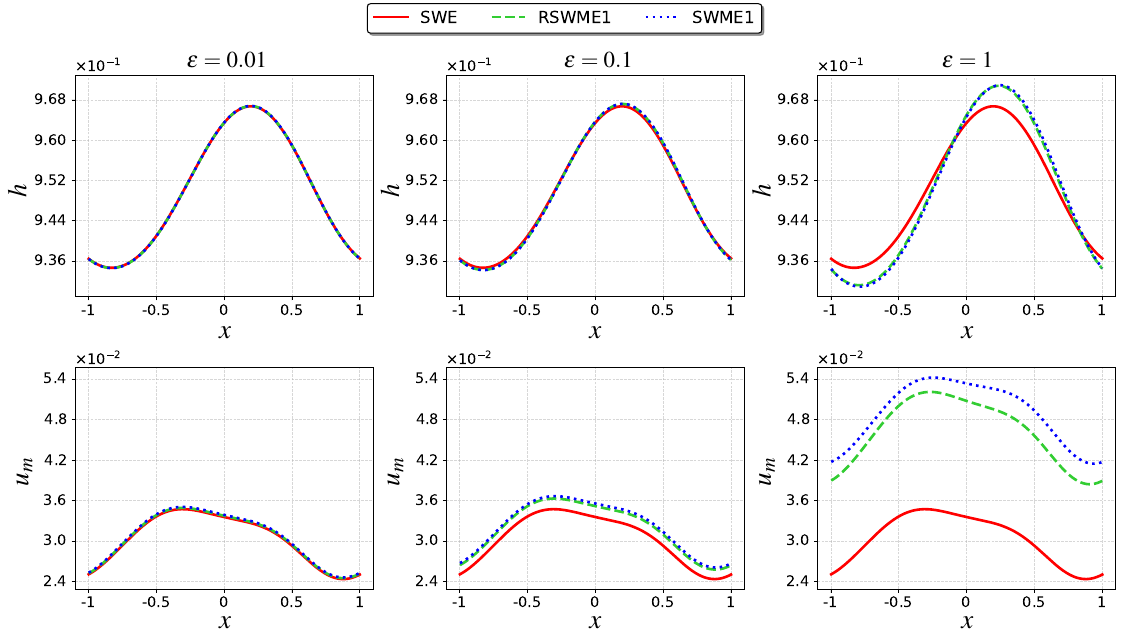}
    \caption{Smooth sine-wave test comparing the SWE (red, solid), RSWME1 (green, dashed), and SWME1 (blue, dotted) models at \(t=2.0\). The results are presented for the variables height $h$ (top row) and average velocity $u_m$ (bottom row) across three values of \(\varepsilon\): \(\varepsilon = 0.01\) (left), \(\varepsilon = 0.1\) (middle), and \(\varepsilon = 1\) (right).}
    \label{fig:test1b}
\end{figure}

\begin{figure}[t!]
    \centering
    \includegraphics[width=\textwidth, keepaspectratio]{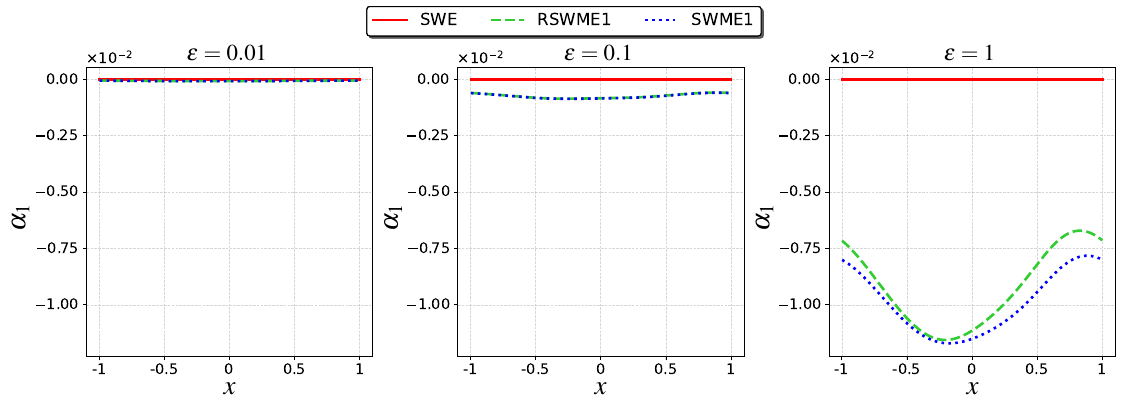}
    \caption{Smooth sine-wave test comparing the SWE (red, solid), RSWME1 (green, dashed), and SWME1 (blue, dotted) models at \(t=2.0\). The results are presented for the moment coefficient $\alpha_1$ across three values of \(\varepsilon\): \(\varepsilon = 0.01\) (left), \(\varepsilon = 0.1\) (middle), and \(\varepsilon = 1\) (right).}
    \label{fig:test1balpha}
\end{figure}

\begin{table}[b]
        \centering
        \caption{Relative $L_1$-error ($t = 2.0$ and $N = 1$) for the smooth sine wave test.}
        \begin{tabular}{l|ccc}
            \toprule
            & $\varepsilon = 0.01$ & ${\varepsilon} = 0.1$ & $\varepsilon = 1$ \\
            \midrule         
            SWE, in $h$ &    4.6365e-5 &3.6815e-4   &3.2456e-3 \\
            RSWME1, in $h$&   1.7695e-5& 5.4793e-5&   4.3920e-4 \\
            \midrule
            SWE, in $u_m$ &    9.4071e-3 & 5.8338e-2  &3.8118e-1  \\
            RSWME1, in $u_m$&   4.0201e-3&  1.0659e-2  &5.6338e-2\\
            \hline
        \end{tabular}
        \label{tab:ResTest2}
\end{table}

For $N=2$, we compare the RSWME2 system from \eqref{eq: ASWME2} with the SWE and SWME2. Numerical tests for the smooth sine wave cases were again conducted for various values of $\varepsilon$ over a duration of two seconds. In Figure \ref{fig:testII}, the numerical solutions for the average velocity $u_m$ and height $h$ are presented. In Figure \ref{fig:testII} one can observe that the RSWME2 lead to a more accurate solution of the SWME2, compared to the SWE. In Table 5, the relative $L1$-error for the RSWME2 and SWE solutions are shown with respect to the SWME2 as the reference solution. The errors in Table 4 and Table 5 are of a similar magnitude. This indicates that for both $N=1$ and $N=2$, the RSWME lead to an approximation closer to the SWME2 compared to the SWE. The smallest relative improvement by selecting the RSWME2 over the SWE is 57\% and occurs in variable $u_m$, when $\varepsilon=0.01$. The largest relative improvement takes place when $\varepsilon=1$ for 86\% and is measured in $h$.
\begin{figure}[h!]
    \centering
    \includegraphics[width=\textwidth, keepaspectratio]{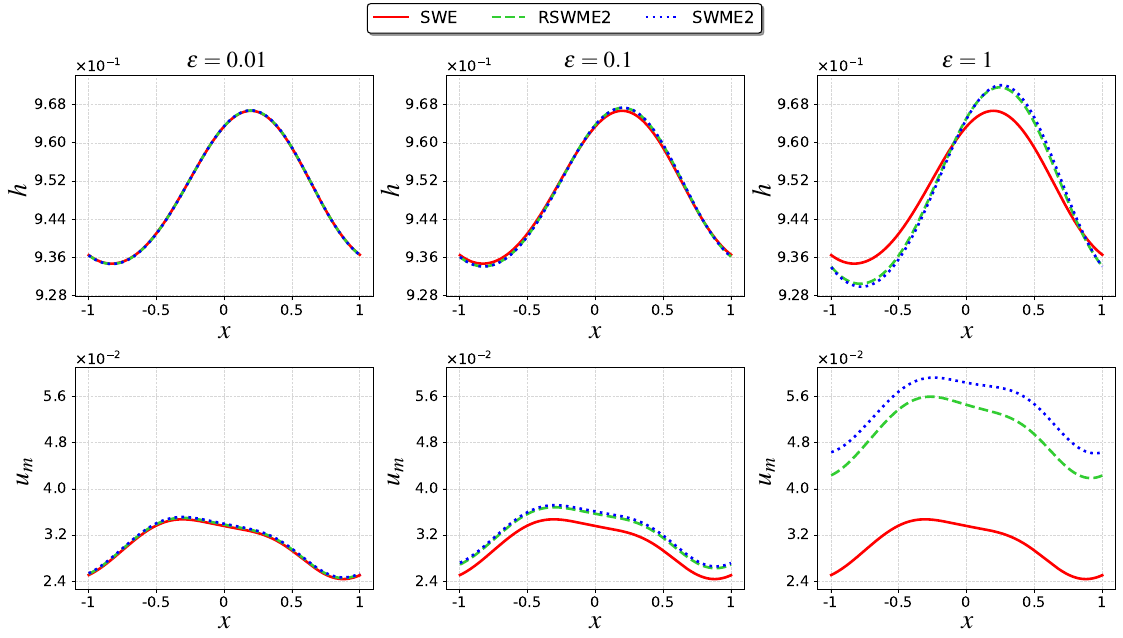}
    \caption{Smooth sine-wave test comparing the SWE (red, solid), RSWME2 (green, dashed), and SWME2 (blue, dotted) models at \(t=2.0\). The results are presented for the variables height $h$ and average velocity $u_m$ across three values of \(\varepsilon\): \(\varepsilon = 0.01\) (left), \(\varepsilon = 0.1\) (middle), and \(\varepsilon = 1\) (right).}
    \label{fig:testII}
\end{figure}

By substituting the numerical solutions for $u_m$ and $h$ into the closure relations \eqref{eq:final2_alpha1} and \eqref{eq:final2_alpha2}, we reconstruct the moment variables $\alpha_1$ and $\alpha_2$ in the RSWME2 system. The $\partial_x(h^4)$-term in \eqref{eq:final2_alpha1}-\eqref{eq:final2_alpha2} is numerically computed using a Godunov FVM scheme. In Figure \ref{fig:testIIalpha}, the moment variables from the RSWME2 framework are presented alongside the numerically computed SWME2 moment variables.
For $\varepsilon = 0.01$, the values of $\alpha_1$ and $\alpha_2$ in both the RSWME2 and SWME2 frameworks vanish and remain close to zero. For $\varepsilon = 0.1$, the values of $\alpha_1$ and $\alpha_2$ deviate further from zero compared to smaller values of $\varepsilon$. Notably, for $\varepsilon = 0.1$, the RSWME2 and SWME2 solutions for $\alpha_1$ and $\alpha_2$ coincide.
For $\varepsilon = 1$, the approximated RSWME2 values of $\alpha_1$ and $\alpha_2$ lie between the SWME2 values and the zero moment value for the SWE. Additionally, for $\varepsilon = 0.1$ and $\varepsilon = 1$, the SWME2 and RSWME2 solutions for $\alpha_2$ are consistently smaller than those for $\alpha_1$.

\begin{figure}[t!]
    \centering
    \includegraphics[width=\textwidth, keepaspectratio]{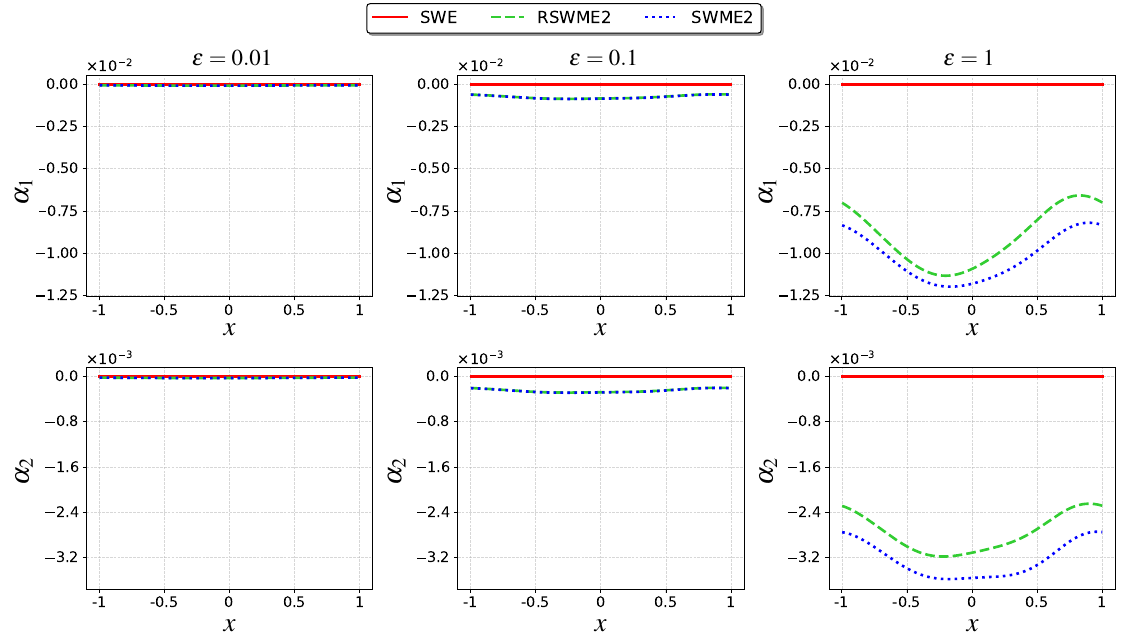}
    \caption{Smooth wave test comparing the SWE (red, solid), RSWME2 (green, dashed), and SWME2 (blue, dotted) models at \(t=2.0\). The results are presented for the moment coefficients $\alpha_1$ and $\alpha_2$ across three values of \(\varepsilon\): \(\varepsilon = 0.01\) (left), \(\varepsilon = 0.1\) (middle), and \(\varepsilon = 1\) (right).}
    \label{fig:testIIalpha}
\end{figure}

\begin{table}[t!]
        \centering
         \caption{Relative $L_1$-error ($t=2.0$, $N=2$) for the smooth sine wave test.}
        \begin{tabular}{l|ccc}
            \toprule
            & ${\varepsilon} = 0.01$ & ${\varepsilon} = 0.1$ & ${\varepsilon} = 1$ \\
            \midrule
            SWE, in $h$ & 5.6009e-5   & 4.7676e-4& 4.1328e-3\\
            RSWME2, in $h$&1.6798e-5 &   5.5410e-5& 6.1433e-4\\
            \midrule
            SWE, in $u_m$ & 1.1182e-2   & 7.2932e-2  & 4.3821e-1\\
            RSWME2, in $u_m$&4.0137e-3&    1.0549e-2 &  7.4865e-2\\
            \hline
        \end{tabular}
        \label{tab:ResTest3}
\end{table}

\paragraph{Comparison of RSWME2 and HRSWME2}
Using the configuration of Table 3, we perform numerical tests for $N=2$ comparing the RSWME2 and the HRSWME2 from Theorem \ref{thm:hyperbolicity_reg2} for various values of $\varepsilon=0.1,1,10$. In Table 6, the errors for the numerical tests experiments are presented, with the SWME2 serving as the reference solution. 

For the values of $\varepsilon = 0.1, 1$, we can observe that the differences between the RSWME2 and HRSWME2 in terms of error are not significant. There are two reasons for this. First, the RSWME2 model is stable, since $\varepsilon$ is not too large and thereby the condition \eqref{eq:condition2} holds. Second, because $h < 1$ holds in Figure \ref{fig:testII}, the hyperbolicity condition $
    0 < h < 1 < \frac{3\sqrt{5}}{\varepsilon},
$ is satisfied for $\varepsilon = 0.1, 1$.

In Theorem \ref{thm:hyperbolicity_reg2}, the added hyperbolic regularisation term is of order $\mathcal{O}(\varepsilon^4)$. For $\varepsilon = 0.1$, this term is negligible due to its small magnitude. Furthermore, the friction term is scaled by a factor of $\frac{4}{6 \cdot 180^2} \approx 2 \times 10^{-5}$, which means that its contribution is minimal. Consequently, even for $\varepsilon = 1$, the additional term remains small.

Given that the added regularisation term is negligible for both $\varepsilon = 0.1$ and $\varepsilon = 1$, the difference between the solutions for 
RSWME2 and HRSWME2 can be expected to be insignificant.

For the largest value of $\varepsilon = 10$ in the numerical test, the HRSWME2 exhibit smaller errors compared to the RSWME2. Importantly, at $\varepsilon = 10$, the condition in \eqref{eq:condition2} no longer holds. This potential breakdown of hyperbolicity of the RSWME2 can introduce instabilities, resulting in erroneous solutions. Consequently, the inclusion of a hyperbolicity term in the HRSWME2 helps to mitigate these instabilities, which explains its improved accuracy.

On the other hand, for $\varepsilon=10$ the errors of the SWE are smaller than for both the HRSWME2 and RSWME2. This can be explained by the fact that a core model assumption no longer holds, which states that $\varepsilon$ is small. Hence, the HRSWME2, while hyperbolic in comparison to the RSWME2, are no longer valid for $\varepsilon=10$.

In further tests we will not continue to use the HRSWME2 model since for valid values of $\varepsilon$ we see no improvement in accuracy or in stability. Whereas accuracy gains are observed for large $\varepsilon$ values, the underlying assumptions of the RSWME2 model break down. In this situation, which also results in larger errors for the HRSWME2 compared to the SWE model. Consequently, introducing a hyperbolic regularisation term to the HRSWME2 does not yield any benefit for the numerical solution.

\begin{table}[t!]
        \centering
         \caption{The relative $L_1$-error ($t=2.0$, $N=2$) for the sine wave test.}
        \begin{tabular}{l|ccc}
            \toprule
            & ${\varepsilon} = 0.1$ & ${\varepsilon} = 1$ & ${\varepsilon} = 10$ \\
            \midrule
            SWE, in $h$  &   4.7676e-4 & 4.1328e-3&1.6712e-2\\
            RSWME2, in $h$  & 5.5410e-5& 6.1433e-4& 3.5656e-1\\
            HRSWME2, in $h$ & 5.5410e-5 &6.1407e-4& 3.6133e-2\\
            \midrule
            SWE, in $u_m$ &    7.2932e-2 & 4.3821e-1& 8.2080e-1\\
            RSWME2, in $u_m$&   1.0549e-2 & 7.4865e-2& 1.5482e-0\\
            HRSWME2, in $u_m$ & 1.0549e-2 & 7.4790e-2& 1.0000e-1\\
            \hline
        \end{tabular}
        \label{tab:ResTest4}
\end{table} 
\newpage
\subsection{Test III: square root velocity profile}

To investigate whether increasing the number of moments results in a more accurate approximation, we select an initial velocity profile from \cite{Koellermeier2022} with a square root $u(x,\zeta,t)=\frac{2}{3}\sqrt{\zeta}$. The initial conditions for the average velocity and the first moment coefficients are
\begin{align*}
    u_m(x,0) &= 1, \quad \alpha_1(x,0) = -\frac{3}{5},\quad \alpha_2(x,0) = -\frac{1}{7},\\
    \alpha_3(x,0) &= -\frac{1}{15}, \quad \alpha_4(x,0) = -\frac{3}{77},\quad \alpha_5(x,0) = -\frac{3}{35},\quad \alpha_6(x,0) = \frac{1}{55}.
\end{align*}
In Table 7, the configurations for the numerical tests are shown. 
\begin{table}[h!]
\centering
\label{tab:test3}
\caption{Numerical setup for the square root velocity test.}
\begin{tabular}{|l||c|}
\hline
kinematic viscosity            & $\nu=2$ \\
slip length           &$\lambda=2$ \\
temporal domain & \( t\in[0,2] \) \\
spatial domain       & periodic \( x \in [-1, 1] \) \\
spatial resolution & $n_x=1000$\\
initial height             & $h(x,0) =  1 -0.1\sin{(\frac{\pi x}{2})}^2$ \\
initial velocity profile          & \( u(x, 0, \zeta) =\frac{3}{2}\sqrt{\zeta} \) \\
CFL condition           & $0.7$ \\
numerical scheme            & PRICE \cite{Canestrelli2009, CastroDaz2012} \\
\hline
\end{tabular}
\end{table}

Various numerical tests are performed for $N=2,4,6$ to compare the accuracy for the SWME, RSWME and SWE solutions. It is important to note that according to Theorem \ref{thm:identical}, the RSWME2, the RSWME4 and the RSWME6 are identical. 
In Figure \ref{fig:test3a}, the numerical solutions for the average velocity $u_m$ and height $h$ are presented. For every choice of $N=2,4,6$, the RSWME model is closer to the the SWME model for both $h$ and $u_m$ compared to the SWE model. 

\begin{figure}[t!]
    \centering
    \includegraphics[width=\textwidth, keepaspectratio]{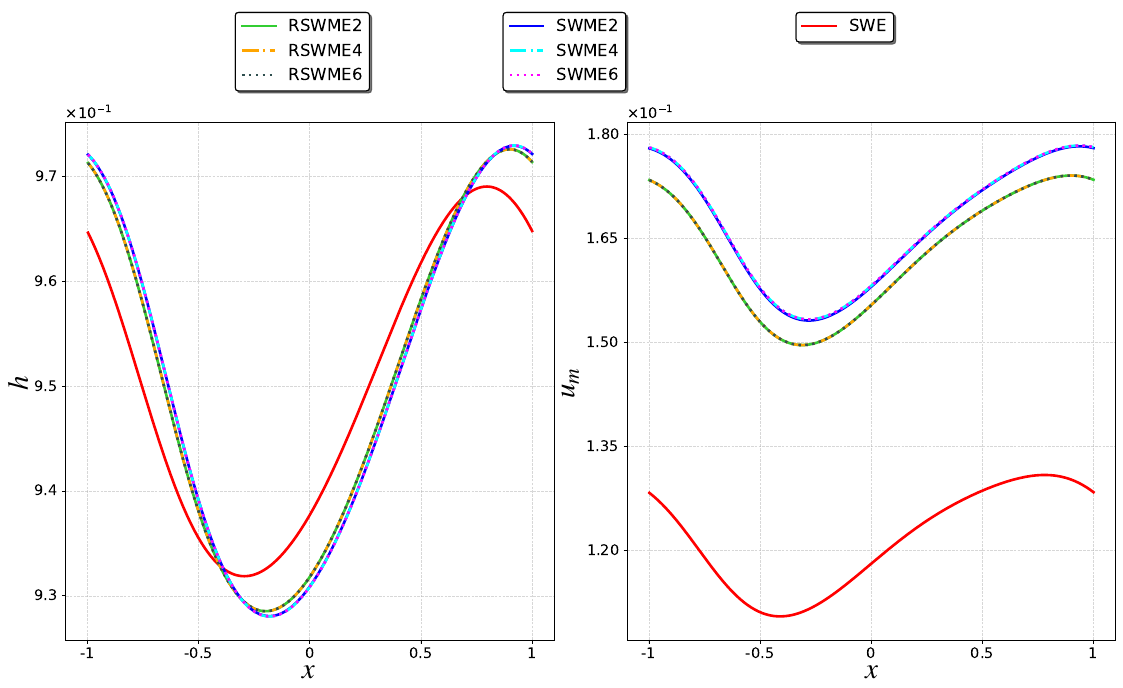}
    \caption{Square root velocity profile test comparing the SWE, RSWME, and SWME models for \(N=2\) (solid), $N=4$ (dashed) and $N=6$ (dotted) at \(t=2.0\). The results are presented for the variables height $h$ (left) and average velocity $u_m$ (right) and \(\varepsilon=0.5\).}
    \label{fig:test3a}
\end{figure}

By substituting the numerical solutions for $u_m$ and $h$ into the closure relation \eqref{eq:approxalphaj} we reconstruct the moment variables $\alpha_{N,i}$, $i=1,\ldots,N$, in the RSWME system. The $\partial_x(h^4)$-term in \eqref{eq:approxalphaj} is again numerically computed using a Godunov FVM scheme. 

In Figure \ref{fig:test3b}, the moment variables from the RSWME framework are presented alongside the numerically computed SWME moment variables. We observe that the magnitude of $\alpha_{N,i}$, $i=1,\ldots,N$, decreases strictly as we increase the order of the moments. This makes sense, because the $\alpha_{N,i}$, for $i=1,\ldots, N$, are expansion coefficients of a converging expansion.

Computations for $\alpha_5$ and $\alpha_6$ show that the coefficients for the RSWME are already equal to zero, which goes into the same direction as the SWME, for which a small contribution of the size of $\mathcal{O}(10^{-7})$-$\mathcal{O}(10^{-8})$ remains.
Furthermore, one can observe that for \( N = 2, 4, 6 \), the RSWME moment closure relations yield solutions for \( \alpha_{N,i} \) (where \( i \leq 4 \)) that provide a more accurate approximation of the SWME solutions compared to the case where \( \alpha_{N,i} = 0 \), for $i=1,\ldots,N$ in the SWE framework.

\begin{figure}[t!]
    \centering
    \includegraphics[width=\textwidth, keepaspectratio]{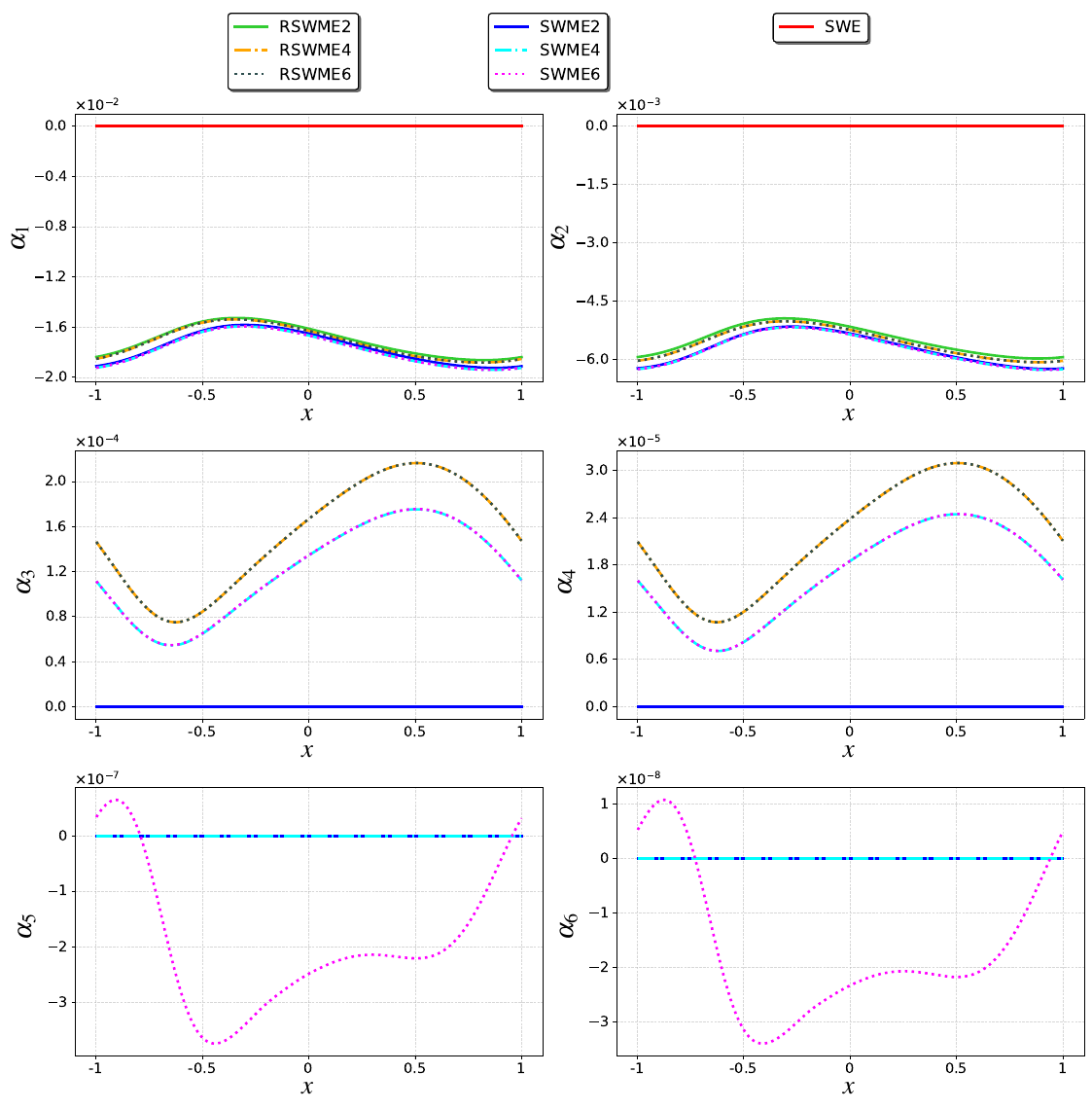}
    \caption{Square root velocity profile test comparing the SWE, RSWME, and SWME models for \(N=2\) (solid), $N=4$ (dashed) and $N=6$ (dotted) at \(t=2.0\). The results are presented for the moment coefficient $\alpha_{N,i}$, $i=1,\ldots,6$ and \(\varepsilon=0.5\).}
    \label{fig:test3b}
\end{figure}

By employing the numerically computed solutions for \( u_m \) and the moment variables, the vertical velocity profile at \( x = x_0 = 0.5 \) can be visualised using the relation provided in \eqref{eq:velocity_expansion}. In Figure \ref{fig:test3c}, we compare the velocity profiles for the SWE, SWME and RSWME for $N=2,4,6$, with the fixed constant $\varepsilon=0.5$. The vertical velocity profile predicted by the RSWME aligns more closely with that of the SWME than does the profile obtained from the SWE. For every choice of $N=2,4,6$ the RSWME approximates the SWME better, compared to the SWE. 

\begin{figure}[t!]
    \centering
    \includegraphics[width=\textwidth, keepaspectratio]{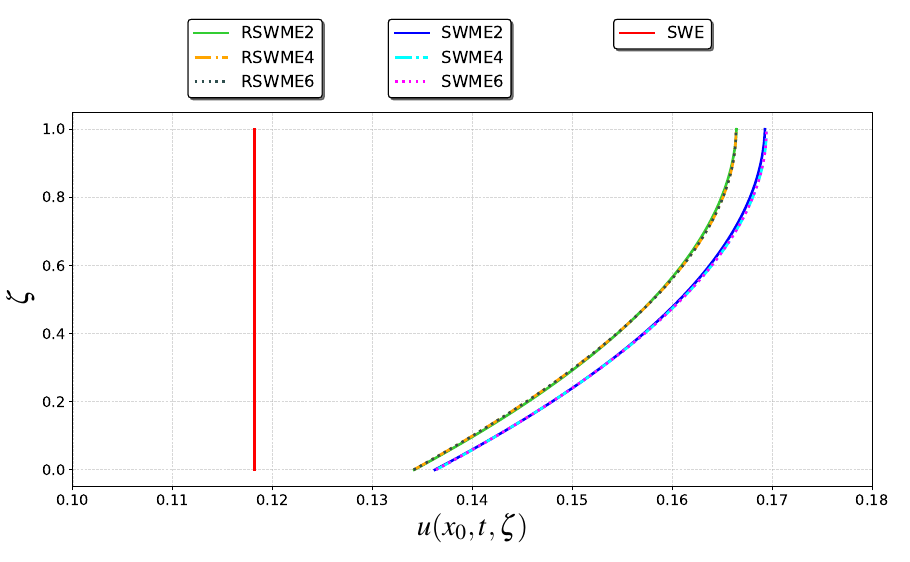}
    \caption{Square root velocity profile test comparing the SWE, RSWME, and SWME models for \(N=2\) (solid), $N=4$ (dashed) and $N=6$ (dotted) at \(t=2.0\). The results are presented for the vertical velocity profile $u(x_0,t,\zeta)$ for \(\varepsilon=0.5\) and $x_0=0.0$.}
    \label{fig:test3c}
\end{figure}

In Table \ref{tab:runtime}, the computational runtimes for the various models are presented. The differences in runtimes between the SWE and RSWME are comparatively negligible. However, we can observe that for $N=2,4,6$, the runtime for the SWME is always significantly larger compared to the SWE and RSWME. This can be explained by the fact that the SWE and RSWME solve a system for only $2$ variables, whereas the SWME solve a system for $N+2$ variables.

The small difference in runtime between the RSWME and SWE arises from an additional post-processing step required for the RSWME: computing the closure relations for the moment variables \eqref{eq:approxalphaj}. In the numerical test, results are computed up to $t=2$, and the closure relations for the moment variable are evaluated only once at the final time step.

Our experimental results in Table \ref{tab:runtime} indicate that the RSWME model is between 18\% (for $N=2$) and 77\% (for $N=6$) faster than the SWME. The simulation time of the RSWME2 exceeds the runtime of the SWE by a mere 5\%, which is an insignificant amount of overhead. Furthermore, our results show that increasing the number of moments significantly raises the simulation time for the SWME model, whereas the simulation time for the RSWME remains unchanged. 

For conciseness, the runtimes for the test cases with a sharp wave and a smooth sine wave are not included. During experiments, we noticed similar speedups as in Table \ref{tab:runtime}.

\begin{table}[t!]
    \centering
     \caption{Comparison of CPU runtime for simulating the square root velocity profile test case up to \( t = 2 \) on AMD EPYC 7763 64-core processor.}
    \begin{tabular}{|c||c|c|c|}
    \hline
        \textbf{\( N \)} & \textbf{SWE (s)} & \textbf{SWME (s)} & \textbf{RSWME (s)} \\ \hline \hline  
        0& 132.6 & - & - \\ \hline
        2& - & 170.0 &138.7\\ \hline
         4&-&307.4& 138.6\\ \hline
         6&-&606.8&138.9\\ \hline
    \end{tabular}
      \label{tab:runtime}
\end{table}

\section{Conclusion}
In this paper we introduced the Reduced Shallow Water Moment Equations (RSWME), as an accurate but more efficient approximation of Shallow Water Moment Equations (SWME) with high viscosity and large slip length. The RSWME are able to model vertical velocity profiles with slight deviations from a uniform velocity profile, while having fewer variables compared to the SWME. 

A notable cost of the SWME lies in its need to include a larger number of variables compared to the Shallow Water Equations (SWE), even when the SWME have an equilibrium that could be represented by fewer variables. To simplify the SWME near an equilibrium where viscosity and slip length are large, we apply an asymptotic expansion to analyse small deviations around this equilibrium.

The SWME were subjected to a large-viscosity and large-slip scaling. Applying asymptotic analysis to this SWME model yielded explicit closure relations for moment variables up to second order in the smallness parameter. The RSWME were obtained by substituting these closure relations into the original SWME model. Because the RSWME are a dimensionally reduced model and contain fewer variables, the RSWME have a lower computational cost. Additionally, we showed that the closed RSWME system is identical for large numbers of moments.

The simple and closed form allowed for an in-depth analysis of the equations, which was exemplified by studying its hyperbolicity. Similar to related SWME models, a hyperbolic regularisation was performed.

The accuracy and computational efficiency of the RSWME were evaluated through numerical tests involving a wave with a sharp height gradient, a smooth sine wave test case, and a square root initial velocity distribution.
While showing remarkable accuracy for the height and average velocity, the test with a wave and a sharp gradient of height yields incorrect solutions in the moment variables for large deviations from equilibrium. For smooth sine waves, accurate RSWME solutions were obtained that approximate the SWME more accurately compared to the SWE, for both small and large deviations from equilibrium. In the smooth sine wave test the relative error of the RSWME improves by up to 88\% relatively compared to the SWE. A test with a square root velocity profile showed that the RSWME yield a more accurate velocity profile than the SWE. For the square root velocity profile test, the RSWME model demonstrates a runtime reduction up to 77\%, compared to the SWME.

These findings support two distinct interpretations: (1) The RSWME achieve accuracy comparable to the SWME while requiring significantly lower computational time, and; (2) The RSWME model outperforms the SWE in accuracy with similar computational costs. These outcomes render the RSWME model efficient for simulations.

For future work, the application of asymptotic approximations to SWME with alternative scalings for viscosity and slip length are of interest. Such an analysis could focus on the equilibria detailed in \ref{A:scalings}, by developing for example reduced models characterised by a small slip length.

\section*{CRediT authorship contribution statement}

\textbf{M. Daemen:} Conceptualization, Methodology, Software, Formal Analysis, Investigation, Data Curation, Writing - Original Draft, Writing - Review \& Editing, Visualization. \textbf{J. Careaga:} Conceptualization, Writing - Review \& Editing, Supervision, Visualization. \textbf{Z. Cai} Conceptualization, Resources, Writing - Review \& Editing. \textbf{J. Koellermeier:} Conceptualization, Resources, Writing - Review \& Editing, Supervision, Funding acquisition.

\section*{Acknowledgments}
This work is supported by the Dutch Research Council (NWO) through the ENW Vidi project HiWAVE with file number VI.Vidi.233.066. We acknowledge Rik Verbiest for providing a numerical simulation code for the Shallow Water Moment Equations (SWME).

\section*{Declaration of generative AI and AI-assisted technologies in the manuscript preparation process.}
 During the preparation of this work the author(s) used Le Chat from Minstral AI to improve writing style, improve formatting of equations and debug software. After using this tool/service, the author(s) reviewed and edited the content as needed and take(s) full responsibility for the content of the published article.

\appendix
\section{Asymptotic approximations for alternative scalings of friction parameters}
\label{A:scalings}
For different scalings of the friction parameters $\lambda$ and $\nu$, approximations near their respective equilibria can be derived. Table \ref{tab:VariousCases} summarises the feasible cases, demonstrating that such approximations can only be obtained when the viscosity $\nu$ is of order $\mathcal{O}(\varepsilon^{-1})$ or the slip length $\lambda$ is of order $\mathcal{O}(\varepsilon)$. In this table the fourth column indicates the order of the asymptotic solution, while the fifth column lists the variables retained in the dimensionally reduced system near equilibrium.

A possible strategy, presented as case II, to reduce the number of variables involves scaling the slip length by $\lambda= \lambda_0\varepsilon$. In the no-slip case II, the leading order solution as $\varepsilon\rightarrow0$ in \eqref{eq:IntroSWME2} and \eqref{eq:IntroSWME3} reduces to
\begin{align}
    u_m+\alpha_1+\ldots+\alpha_N=0. \label{eq:leadingordII}
\end{align}
This leading order approximation \eqref{eq:leadingordII} coincides with the no-slip boundary condition, $u|_{\xi=0}=0$. 
In the no-slip case~II, an asymptotic approximation exists, but reduces the system only by a single variable through the approximation of $\alpha_N$ as a function of $h$, $u_m$, and $\alpha_1, \dots, \alpha_{N-1}$, resulting in approximations of order $\mathcal{O}(\varepsilon^0)$.
To study a case that achieves greater dimensional reduction and more accurate asymptotic approximations, we focus on the viscous slip case in our research.

In cases III and IV in Table \ref{tab:VariousCases}, the scalings $\nu=\frac{\lambda_0}{\varepsilon}$ and $\nu=\frac{\lambda_0}{\varepsilon}$, $\lambda = \lambda_0\varepsilon$ are applied, respectively. Letting $\varepsilon\rightarrow0$ in \eqref{eq:IntroSWME1}-\eqref{eq:IntroSWME3}, results in the leading order equations
\begin{align}
    u_m=\alpha_1=\ldots=\alpha_N=0,\quad \partial_th =0.
\end{align}
For cases III and IV, the application of asymptotic approximations ultimately produces a system reduced to one variable $h$.
Additionally, the approximated solutions for $u_m$ and $\alpha_j$ ($j=1,\dots,N$) are of order $\mathcal{O}(\varepsilon)$, indicating that these solutions are already close to a steady-state equilibrium where $h$ is constant and $u = 0$. 

\begin{table}[htbp]
    \centering
    \caption{Possible scalings for slip length $\lambda$ and viscosity $\nu$.}
    \label{tab:VariousCases}
    \begin{tabular}{ccccc}
        \toprule
        \textbf{Case} & $\boldsymbol{\lambda}$ & $\boldsymbol{\nu}$ & \textbf{Order of solution} & \textbf{Variables for $\mathcal{E}$} \\
        \midrule
         I & $\mathcal{O}(\varepsilon^{-1})$ & $\mathcal{O}(\varepsilon^{-1})$ & $\mathcal{O}(\varepsilon^2)$ & $h, u_m$ \\
        II  & $\mathcal{O}(\varepsilon)$  & $\mathcal{O}(\varepsilon^0)$ & $\mathcal{O}(\varepsilon^0)$ & $h, hu_m, h\alpha_1,\dots,h\alpha_{N-1}$ \\
        III & $\mathcal{O}(\varepsilon^0)$  & $\mathcal{O}(\varepsilon^{-1})$ & $\mathcal{O}(\varepsilon^3)$ & $h$ \\
        IV & $\mathcal{O}(\varepsilon)$  & $\mathcal{O}(\varepsilon^{-1})$ & $\mathcal{O}(\varepsilon^3)$ & $h$ \\
        \hline
    \end{tabular}
\end{table}

\section{Proof of Theorem \ref{thm:identical}}
\label{s:appenB}
In this section, we prove Theorem \ref{thm:identical}. First, we present Lemma \ref{lem:lem1} that explains how $\boldsymbol{C}_N\boldsymbol{b}=1$, can be solved for arbitrary $N$. Using Lemma \ref{lem:lem1}, Theorem \ref{thm:identical} is proven.

\color{black}

\begin{lemma}
\label{lem:lem1}
Let \(N \geq 2\) and \(\boldsymbol{C}_N\in \mathbb{R}^{N\times N  }\) be the matrix with entries given by the constants $\boldsymbol{C}_{Nij}=C_{ij}$ for $i,j = 1,\dots,N$ defined in \eqref{eq:Cconst}, and let $\boldsymbol{1} \in \mathbb{R}^N $ be the vector of ones. Then, the following linear system
\begin{align}
 \boldsymbol{C}_N \boldsymbol{b}_N = \boldsymbol{1},\label{eq:lemma}
\end{align}
has a unique solution \(\boldsymbol{b}_N = (b_1, b_2, \ldots, b_N)^{\rm T}\in \mathbb{R}^N\) given by
\[
b_1 = \frac{1}{4}, \quad b_2 = \frac{1}{12}, \quad \text{and} \quad b_i = 0 \quad \text{for } i = 3, \ldots, N.
\]
\end{lemma}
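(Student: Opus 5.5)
The plan is to separate uniqueness from existence. For uniqueness, invoke the invertibility of $\boldsymbol{C}_N$ established in \cite{Huang2022}, which the paper already used to obtain \eqref{eq:trivial_solution}. Then it suffices to check that the proposed vector $\boldsymbol{b}_N=(\tfrac14,\tfrac1{12},0,\dots,0)^{\rm T}$ satisfies \eqref{eq:lemma}. Because only $b_1,b_2$ are nonzero, the $i$-th row of \eqref{eq:lemma} reduces to
\begin{align*}
\tfrac14 C_{i1}+\tfrac1{12}C_{i2}=1,\qquad i=1,\dots,N.
\end{align*}

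A preliminary point concerns normalisation. The equations actually used in the derivation, namely \eqref{eq:SWME_viscous_system} and \eqref{eq: Wanggen} with their terms $\frac{4\lambda_0}{\varepsilon h}\alpha_1$ and $\frac{12\lambda_0}{\varepsilon h}\alpha_2$, show that the entries of $\boldsymbol{C}_N$ are the stiffness integrals $\int_0^1\partial_\zeta\phi_i\,\partial_\zeta\phi_j\,\mathrm d\zeta$. The prefactor $(2i+1)$ in \eqref{eq:Cconst} is the one already pulled out in front of the friction term in \eqref{eq:IntroSWME3}. As a check, this gives $C_{11}=4$ and $C_{22}=\int_0^1(12\zeta-6)^2\,\mathrm d\zeta=12$, which matches the $N=1,2$ computations. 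I will work with this normalisation, since it is the one under which $\boldsymbol{C}_N$ enters \eqref{eq:MatrixC2}. With the factor $(2i+1)$ kept, the same argument instead yields $\boldsymbol{C}_N\boldsymbol{b}_N=(3,5,\dots,2N+1)^{\rm T}$.

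The key step is to use bilinearity in the second index to collapse the row sum into a single integral. From $\phi_1'=-2$ and $\phi_2'=12\zeta-6$, the combination $\psi:=\tfrac14\phi_1+\tfrac1{12}\phi_2$ satisfies $\psi'(\zeta)=\zeta-1$. Hence the $i$-th row equals $\int_0^1\phi_i'(\zeta)(\zeta-1)\,\mathrm d\zeta$. Integrating by parts gives
\begin{align*}
\int_0^1\phi_i'(\zeta)(\zeta-1)\,\mathrm d\zeta=\big[\phi_i(\zeta)(\zeta-1)\big]_0^1-\int_0^1\phi_i\,\mathrm d\zeta=\phi_i(0)-\int_0^1\phi_i\,\mathrm d\zeta .
\end{align*}
The normalisation $\phi_i(0)=1$ makes the boundary term equal to $1$. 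Orthogonality \eqref{eq:orthogonality} of $\phi_i$ against the constant $\phi_0=1$ for $i\ge1$ makes the remaining integral vanish. So every row equals $1$, for every $i\le N$, and this holds uniformly in $N\ge2$ (the case $N\ge2$ is needed only so that $b_2$ has a slot).

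The only step I expect to need care is the normalisation bookkeeping described above, ensuring the $\boldsymbol{C}_N$ in the lemma is the one used in \eqref{eq:MatrixC2}. The calculation itself is short once one notices that $\tfrac14\phi_1'+\tfrac1{12}\phi_2'$ is the linear function $\zeta-1$ vanishing at the surface $\zeta=1$. The idea behind it is that the right-hand side $\boldsymbol{1}$ corresponds to the boundary functional $\phi_i(0)$, which is represented exactly by the linear profile in $\mathrm{span}\{\phi_1,\phi_2\}$.
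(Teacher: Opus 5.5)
Your proof is correct, but the key verification step differs from the paper's. The paper quotes from \cite{Huang2022} the closed form $C_{ij}=0$ for $i-j$ odd and $C_{ij}=2\min(i,j)(\min(i,j)+1)$ for $i-j$ even. It reads off the first two columns $(4,0,4,0,\dots)^{\rm T}$ and $(0,12,0,12,\dots)^{\rm T}$, observes $\tfrac14\boldsymbol{c}_1+\tfrac1{12}\boldsymbol{c}_2=\boldsymbol{1}$, and gets uniqueness from invertibility.

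You work instead from the integral definition. Bilinearity reduces row $i$ to $\int_0^1\phi_i'(\zeta)(\zeta-1)\,\mathrm{d}\zeta$, and one integration by parts leaves $\phi_i(0)-\int_0^1\phi_i\,\mathrm{d}\zeta=1$.

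Your route needs only $\phi_1'$, $\phi_2'$, the normalisation $\phi_i(0)=1$ and the zero mean of each $\phi_i$. So it does not depend on the cited entry formula, and it explains why $\boldsymbol{1}$ is the right-hand side: it is the evaluation functional at $\zeta=0$. You can also drop the reliance on \cite{Huang2022} for uniqueness, since $\boldsymbol{C}_N$ is the Gram matrix of the linearly independent polynomials $\phi_1',\dots,\phi_N'$ and is therefore symmetric positive definite. The paper's route is shorter once the formula is granted, and it displays the full checkerboard structure of $\boldsymbol{C}_N$.

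Your normalisation remark is also right. The closed form used in the paper's proof is symmetric with $C_{11}=4$ and $C_{22}=12$, so it is the $(2i+1)$-free stiffness integral, not \eqref{eq:Cconst} as printed.

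One small citation fix: \eqref{eq:orthogonality} is stated only for indices $1,\dots,N$. Justify $\int_0^1\phi_i\,\mathrm{d}\zeta=0$ directly from \eqref{eq:scaled_legendre_def}: the $(i-1)$-th derivative of $(\zeta-\zeta^2)^i$ vanishes at both $\zeta=0$ and $\zeta=1$.
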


\begin{proof}
We first observe that the matrix $\boldsymbol{C}_N$ can be computed component-wise as
\begin{align}
\begin{aligned}
\boldsymbol{C}_{Nij}  =C_{ij}=
\begin{cases}
0 & \text{if } i - j \text{ is odd}, \\
2\min{(i,j)}(\min{(i,j)}+1) & \text{if } i - j \text{ is even},
\end{cases}
\end{aligned}\label{eq:DefforC}
\end{align}
for $i,j = 1,\ldots,N$, according to \cite{Huang2022}.
From \eqref{eq:DefforC} we can readily observe that its first and second columns of $\boldsymbol{C}_N$ are the respective vectors
\begin{align*}
 \boldsymbol{c}_1 &= (4,0,4,0,4,\dots)^{\rm T}\in \mathbb{R}^N,\\
 \boldsymbol{c}_2 &= (0,12,0,12,0,\dots)^{\rm T}\in \mathbb{R}^N.
\end{align*}
Moreover, multiplying matrix $\boldsymbol{C}_N$ times $\boldsymbol{b}_N = (1/4, 1/12,0,0,\dots,0)^{\rm T}\in \mathbb{R}^N$ gives
\begin{align*}
 \boldsymbol{C}_N\boldsymbol{b}_N = \dfrac{1}{4}\boldsymbol{c}_1 + \dfrac{1}{12}\boldsymbol{c}_2 = \boldsymbol{1}.
\end{align*}
Thus, $\boldsymbol{b}_N$ is a solution to system \eqref{eq:lemma}, and because $\boldsymbol{C}_{N}$ is invertible, this solution also corresponds to the unique solution to \eqref{eq:lemma}.
\end{proof}
A direct consequence of Lemma \ref{lem:lem1} is the equality
\begin{align}\label{eq:expressionfor:Bj}
 \widetilde{B}_j^{(N)} = \sum_{k = 1}^N (\boldsymbol{C}_N^{-1})_{jk} = (\boldsymbol{C}_N^{-1}\boldsymbol{1})_j = b_j,\quad\text{for }j=1,\dots,N,
\end{align}
therefore $\widetilde{B}^{(N)}_j = 0$ for $j = 3,\dots,N$, for all $N>2$. In addition, we can immediately show that the vector
\newline $\widetilde{\boldsymbol{b}}_N = (\tfrac{1}{3}b_1,\tfrac{1}{5}b_2,0,0,\dots)^{\rm T}\in\mathbb{R}^N$
satisfies
\begin{align}
\widetilde{\boldsymbol{C}}_N^{\rm T}\widetilde{\boldsymbol{b}}_N = \boldsymbol{1}\quad\text{equivalently}\quad\widetilde{\boldsymbol{b}}_N =  (\widetilde{\boldsymbol{C}}_N^{\rm T})^{-1}\boldsymbol{1}=(\widetilde{\boldsymbol{C}}_N^{-1})^{\rm T}\boldsymbol{1}.\label{eq:expressionfor:Bj:tilde}
\end{align}
With the help of Lemma \ref{lem:lem1} we now prove Theorem \ref{thm:identical}.
\setcounter{theorem}{2}
\begin{theorem}[Identical closed RSWME systems]
     For \( N \geq 2 \), all RSWME systems are identical.
\end{theorem}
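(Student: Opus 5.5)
The closed RSWME system \eqref{eq:Closed_form_RSWME_sys} depends on $N$ only through the four scalars $\Gamma^N,\Phi^N,\Omega^N,\Lambda^N$ from \eqref{eq:GammaNPhiN} and \eqref{eq:OmegaNLambdaN}. It therefore suffices to show that each of them is independent of $N$ for $N\geq 2$. Each will be rewritten in terms of $\boldsymbol{C}_N^{-1}\boldsymbol{1}$ and $(\widetilde{\boldsymbol{C}}_N^{-1})^{\rm T}\boldsymbol{1}$, and then Lemma \ref{lem:lem1} together with \eqref{eq:expressionfor:Bj} and \eqref{eq:expressionfor:Bj:tilde} gives the values.

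\textbf{The constants $\Omega^N$ and $\Gamma^N$.} By \eqref{eq:expressionfor:Bj}, $\widetilde{B}^{(N)}_j=b_j$, with $b_1=\tfrac14$, $b_2=\tfrac1{12}$ and $b_j=0$ for $j\geq 3$. Hence
\[
\Omega^N=\sum_j b_j=\tfrac13, \qquad \Gamma^N=\tfrac{1}{3}\cdot\tfrac{1}{16}+\tfrac15\cdot\tfrac1{144}=\tfrac{1}{45},
\]
and both match the constants in $T^2_1$ and $T^2_3$.

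\textbf{The constant $\Phi^N$.} From \eqref{eq:F_i_expression},
\[
\Phi^N=\sum_{i,j}(\widetilde{\boldsymbol{C}}_N^{-1}\boldsymbol{C}_N^{-1})_{ij}=\boldsymbol{1}^{\rm T}\widetilde{\boldsymbol{C}}_N^{-1}\boldsymbol{C}_N^{-1}\boldsymbol{1}=\big((\widetilde{\boldsymbol{C}}_N^{-1})^{\rm T}\boldsymbol{1}\big)^{\rm T}\boldsymbol{b}_N=\widetilde{\boldsymbol{b}}_N^{\rm T}\boldsymbol{b}_N .
\]
Using \eqref{eq:expressionfor:Bj:tilde}, this equals $\tfrac13 b_1^2+\tfrac15 b_2^2=\tfrac1{45}$, again independent of $N$. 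Before relying on \eqref{eq:expressionfor:Bj:tilde}, I will verify it. Since $\widetilde{\boldsymbol{C}}_N=\mathrm{diag}(2i+1)\,\boldsymbol{C}_N$ and $\boldsymbol{C}_N$ is symmetric by \eqref{eq:DefforC}, we have $\widetilde{\boldsymbol{C}}_N^{\rm T}=\boldsymbol{C}_N\,\mathrm{diag}(2i+1)$. Then $\widetilde{\boldsymbol{C}}_N^{\rm T}\widetilde{\boldsymbol{b}}_N=\boldsymbol{C}_N\boldsymbol{b}_N=\boldsymbol{1}$.

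\textbf{The constant $\Lambda^N$.} From \eqref{eq:D_i_expression},
\[
\Lambda^N=-\Phi^N+\Big(\sum_{k,l}(\boldsymbol{C}_N^{-1})_{kl}\Big)\Big(\sum_i b_i\Big)=-\Phi^N+(\Omega^N)^2=-\tfrac1{45}+\tfrac19 ,
\]
because $\sum_{k,l}(\boldsymbol{C}_N^{-1})_{kl}=\boldsymbol{1}^{\rm T}\boldsymbol{b}_N=\Omega^N$. This is again $N$-independent. It should be cross-checked against $T^2_3$, whose $\varepsilon^2$ coefficient is $\tfrac1{45}$. The combination of $\Lambda^N$ with $\Phi^N$ and $\Omega^N$ needs care, since sign and normalisation conventions in \eqref{eq:MatrixC3} may differ from what appears in $T^2_3$. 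Only $N$-independence is needed, however, and that holds regardless of the exact constant.

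\textbf{Main obstacle.} The hardest step is the bookkeeping for the $\mathcal{O}(\varepsilon^2)$ terms. One must confirm that the matrices $\widetilde{\boldsymbol{I}}^N_1$ and $\widetilde{\boldsymbol{I}}^N_2$ enter only through products with $\boldsymbol{1}$ and $\boldsymbol{b}_N$. In addition, the nonlinear transport contributions in \eqref{eq:IntroSWME3} involving $A_{ijk}$ and $B_{ijk}$ must not enter at this order. They are quadratic in $\alpha^{(1)}$, hence $\mathcal{O}(\varepsilon^2)$ in the $\alpha$-equation, and so they do not affect $\alpha^{(2)}$. Once all four constants are seen to be $N$-independent, $A^N=A^2$ and $S^N=S^2$ for every $N\geq2$, which proves the theorem.
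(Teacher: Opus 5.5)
Your proposal is correct and follows the paper's proof essentially step for step. Both reduce the theorem to $N$-independence of $\Gamma^N,\Phi^N,\Omega^N,\Lambda^N$, use Lemma~\ref{lem:lem1} to get $\widetilde{B}^{(N)}_j=b_j$, write $\Phi^N=\widetilde{\boldsymbol{b}}_N^{\rm T}\boldsymbol{b}_N$, and finish with $\Lambda^N=-\Phi^N+(\Omega^N)^2$; your symmetry check of $\widetilde{\boldsymbol{C}}_N^{\rm T}\widetilde{\boldsymbol{b}}_N=\boldsymbol{1}$ fills in a step the paper only asserts.

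On the discrepancy you flagged, your value $\Lambda^N=-\tfrac{1}{45}+\tfrac{1}{9}=\tfrac{4}{45}$ is the correct one. The paper's own $N=2$ closure coefficients of $u_mh^2/\lambda_0^2$, namely $\tfrac{3}{48}$ and $\tfrac{19}{720}$, sum to $\tfrac{4}{45}$, so the $\tfrac{1}{45}$ in the displayed $T^2_3$ appears to be a misprint rather than a convention mismatch.
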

\begin{proof}
   To establish that the RSWME systems from \eqref{eq:Closed_form_RSWME_sys} are identical for all~$N \geq 2$, it suffices to demonstrate that the system matrix $A^N$ from \eqref{eq:Closed_form_RSWME} and the source term $S^N$ from \eqref{eq:sourcetermU} satisfy: $A^N(U) = A^2(U)$ and $S^N(U) = S^2(U)$ for all $N\geq 2$. Moreover, by their definitions, these equalities hold if and only if the following equalities are satisfied:
   \begin{align*}
    \Gamma^N = \Gamma^2,\quad \Phi^N = \Phi^2,\quad \Omega^N = \Omega^2,\quad \Lambda^N = \Lambda^2,\quad \text{for }N\geq 2,
   \end{align*}
where $\Gamma^N$, $\Phi^N$, $\Omega^N$ and $\Lambda^N$ are defined in \eqref{eq:GammaNPhiN} and \eqref{eq:OmegaNLambdaN}.
Using \eqref{eq:expressionfor:Bj}, and the fact that $\widetilde{B}_j^{(N)} = 0$ for all $j\geq 3$ and arbitrary $N>2$, we can directly conclude that
\begin{align*}
     \Gamma^N= \sum_{j=1}^N \frac{\left(\widetilde{B}^{(N)}_j\right)^2}{2j+1} = \frac{\left(\widetilde{B}_1^{(N)}\right)^2}{3} + \frac{\left(\widetilde{B}_2^{(N)}\right)^2}{5}=\frac{\left(\widetilde{B}_1^{(2)}\right)^2}{3} + \frac{\left(\widetilde{B}_2^{(2)}\right)^2}{5} = \Gamma^2,
\end{align*}
and 
\begin{align*}
    \Omega^N= \sum_{j=1}^N \widetilde{B}_j^{(N)}=\widetilde{B}_1^{(N)} + \widetilde{B}_2^{(N)}  = \widetilde{B}_1^{(2)} + \widetilde{B}_2^{(2)} = \Omega^2. 
\end{align*}
On the other hand, we define the matrix $\boldsymbol{P}_N = \widetilde{\boldsymbol{C}}_N^{-1}\boldsymbol{C}_N^{-1}$ where each entry is denoted as $(\boldsymbol{P}_N)_{ij} = P_{Nij}$ for all $i,j = 1,\dots,N$. Subsequently, by Lemma \ref{lem:lem1} and Equation \eqref{eq:expressionfor:Bj:tilde}, it follows that
\begin{align*}
 \Phi^N  &= \sum_{i=1}^N \sum_{j=1}^N P_{Nij} = \boldsymbol{1}^{\rm T}(\boldsymbol{P}_N\boldsymbol{1}) = \boldsymbol{1}^{\rm T}(\widetilde{\boldsymbol{C}}_N^{-1}(\boldsymbol{C}_N^{-1}\boldsymbol{1})\\
 &=
 \boldsymbol{1}^{\rm T}(\widetilde{\boldsymbol{C}}_N^{-1}\boldsymbol{b}_N) = \big((\widetilde{\boldsymbol{C}}_N^{-1})^{\rm T}\boldsymbol{1}\big)^{\rm T}\boldsymbol{b}_N = (\widetilde{\boldsymbol{b}}_N)^{\rm T}\boldsymbol{b}_N = \dfrac{1}{3}b_1^2 + \dfrac{1}{5}b_2^2 \\&=\boldsymbol{1}^{\rm T}(\widetilde{\boldsymbol{C}}_2^{-1}\boldsymbol{C}_2^{-1}\boldsymbol{1})= \Phi^2.
\end{align*}
Finally, expressing $\Lambda^N$ in terms of $\Phi^N$ and $\Omega^N$ yields
\begin{align*}
 \Lambda^N = -\Phi^N + (\Omega^N)^2 = -\Phi^2 + (\Omega^2)^2 = \Lambda^2,
\end{align*}
thereby concluding the proof.
\end{proof}

\bibliographystyle{plain} 

\bibliography{referencesI}

@article{Huang2022,
  title = {{E}quilibrium stability analysis of hyperbolic shallow water moment equations},
  volume = {45},
  url = {http://dx.doi.org/10.1002/mma.8180},
  DOI = {10.1002/mma.8180},
  number = {10},
  journal = {Mathematical Methods in the Applied {S}ciences},
  publisher = {Wiley},
  author = {{H}uang,  {Q}. and {K}oellermeier,  {J}. and {Y}ong,  {W}.‐{A}.},
  year = {2022},
  pages = {6459–6480}
}

@misc{Ullika2025,
  doi = {10.48550/ARXIV.2508.02714},
  url = {https://arxiv.org/abs/2508.02714},
  author = {Scholz, U. and Koellermeier, J.},
  title = {Spline {S}hallow {W}ater {M}oment {E}quations},
  publisher = {arXiv},
  year = {2025},
  copyright = {Creative Commons Attribution NonCommercial NoDerivatives 4.0 International},
  note = {Submitted for peer review; arXiv preprint}
}

@article{Kowalski,
title= {{M}oment {A}pproximations and {M}odel {C}ascades for
{S}hallow {F}low},
  volume = {25},
  url = {http://dx.doi.org/10.4208/cicp.OA-2017-0263},
  DOI = {10.4208/cicp.oa-2017-0263},
  number = {3},
  journal = {Communications in Computational Physics},
  publisher = {Global {S}cience Press},
  year = {2019},
  pages = {669–702},
author= {Kowalski, J. and  Torrilhon, M.}
}

@article{JulianKoellermeier2020,
  title = {{A}nalysis and {N}umerical {S}imulation of {H}yperbolic {S}hallow {W}ater {M}oment {E}quations},
  volume = {28},
  url = {http://dx.doi.org/10.4208/cicp.OA-2019-0065},
  DOI = {10.4208/cicp.oa-2019-0065},
  number = {3},
  journal = {Communications in Computational Physics},
  publisher = {Global {S}cience Press},
  author = { Koellermeier, J. and Rominger, M.},
  year = {2020},
  pages = {1038–1084}
}

@mastersthesis{Wang2023,
  author  = { Wang, D.},
  title   = {{E}xtensions and {S}imulations of
{S}hallow {W}ater {E}quations
},
  school  = {National University of {S}ingapore
},
  year    = {2023},
  address = {Singapore}
}

@article{Koellermeier2022,
  title = {Steady states and well-balanced schemes for shallow water moment equations with topography},
  volume = {427},
  url = {http://dx.doi.org/10.1016/j.amc.2022.127166},
  DOI = {10.1016/j.amc.2022.127166},
  journal = {Applied Mathematics and Computation},
  publisher = {Elsevier BV},
  author = {Koellermeier,  J. and Pimentel-García,  E.},
  year = {2022},
  pages = {127166}
}

@article{JosGarresDaz2021,
  title = {{S}hallow {W}ater {M}oment {M}odels for {B}edload {T}ransport {P}roblems},
  volume = {30},
  url = {http://dx.doi.org/10.4208/CICP.OA-2020-0152},
  DOI = {10.4208/cicp.oa-2020-0152},
  number = {3},
  journal = {Communications in Computational Physics},
  publisher = {Global {S}cience Press},
author = {{Garres-Díaz}, J. and {Castro Díaz}, M.J. and {Koellermeier}, J. and {Morales de Luna}, T.},
  year = {2021},
  pages = {903–941}
}

@inbook{Steldermann2024,
  title = {{S}hallow {M}oment {E}quations—{C}omparing {L}egendre and {C}hebyshev {B}asis {F}unctions},
  url = {http://dx.doi.org/10.1007/978-3-031-55264-9_39},
  DOI = {10.1007/978-3-031-55264-9_39},
  booktitle = {Hyperbolic Problems: Theory,  {N}umerics,  Applications. Volume II},
  publisher = {Springer Nature {S}witzerland},
  author = {Steldermann,  I. and Torrilhon,  M. and Kowalski,  J.},
  year = {2024},
  pages = {457–466}
}

@incollection{Vreugdenhil1994,
  author    = {Vreugdenhil, C.B.},
  title     = {Equations},
  booktitle = {Numerical {M}ethods for {S}hallow-{W}ater {F}low},
  year      = {1994},
  publisher = {Springer Netherlands},
  address   = {Dordrecht},
  pages     = {15--46},
  doi       = {10.1007/978-94-015-8354-1_2},
  url       = {https://doi.org/10.1007/978-94-015-8354-1_2}
}

@inbook{Miller_2007, place={Cambridge},title={Numerical {M}odeling of {O}cean {C}irculation}, publisher={Cambridge University Press}, author    = {Miller, R. N.}, year={2007}, pages={35–86}}

@article{Sanz-Ramos_Blade_Oller_Furdada_2023, title={Numerical modelling of dense snow avalanches with a well-balanced scheme based on the {2D} shallow water equations}, volume={69}, DOI={10.1017/jog.2023.48}, number={278}, journal={Journal of Glaciology}, author = {Sanz-Ramos, M. and Bladé, E. and Oller, P. and Furdada, G.}, year={2023}, pages={1646–1662}}

@article{Arcas2011,
author = {Arcas, D. and Wei, Y.},
title = {Evaluation of velocity-related approximations in the nonlinear shallow water equations for the {K}uril {I}slands, 2006 tsunami event at {H}onolulu, {H}awaii},
journal = {Geophysical Research Letters},
volume = {38},
number = {12},
pages = {},
doi = {https://doi.org/10.1029/2011GL047083},
url = {https://agupubs.onlinelibrary.wiley.com/doi/abs/10.1029/2011GL047083},
year = {2011} 
}

@article{Liang2010,
  author    = {Liang, D.},
  title     = {Evaluating shallow water assumptions in dam-break flows},
  journal   = {Proceedings of the Institution of Civil Engineers - Water Management},
  volume    = {163},
  number    = {5},
  pages     = {227--237},
  year      = {2010},
  doi       = {https://doi.org/10.1680/wama.2010.163.5.227}, 
  url       = {}
}

@article{Scholz2023,
  title = {Dispersion in {S}hallow {M}oment {E}quations},
  volume = {6},
  url = {http://dx.doi.org/10.1007/s42967-023-00325-2},
  DOI = {10.1007/s42967-023-00325-2},
  number = {4},
  journal = {Communications on Applied Mathematics and Computation},
  publisher = {Springer {S}cience and Business Media LLC},
 author = {Scholz, U. and Kowalski, J. and Torrilhon, M.},
  year = {2023},
  month = dec,
  pages = {2155–2195},
}

@article{Verbiest2025,
  title = {Capturing {V}ertical {I}nformation in {R}adially {S}ymmetric {F}low {U}sing {H}yperbolic {S}hallow Water {M}oment {E}quations},
  volume = {37},
  url = {http://dx.doi.org/10.4208/cicp.OA-2024-0047},
  DOI = {10.4208/cicp.oa-2024-0047},
  number = {3},
  journal = {Communications in Computational Physics},
  publisher = {Global {S}cience Press},
  author = {Verbiest,  R. and Koellermeier,  J.},
  year = {2025},
  month = jun,
  pages = {810–848}
}

@article{Bobylev2005,
  title = {{I}nstabilities in the {C}hapman-{E}nskog {E}xpansion and {H}yperbolic {B}urnett {E}quations},
  volume = {124},
  url = {http://dx.doi.org/10.1007/s10955-005-8087-6},
  DOI = {10.1007/s10955-005-8087-6},
  number = {2–4},
  journal = {Journal of {S}tatistical Physics},
  publisher = {Springer {S}cience and Business Media LLC},
  author = {Bobylev,  A. V.},
  year = {2005},
  month = dec,
  pages = {371–399}
}

@techreport{kremer2006methods,
author = {Kremer, G.M.},
  title     = {The {M}ethods of {C}hapman-{E}nskog and {G}rad and {A}pplications},
  institution = {NATO Research and Technology Organisation},
  year      = {2006},
  type      = {Technical Report},
  number    = {RTO-EN-AVT-194}
}

@book{struchtrup2005macroscopic,
  author    = { {S}truchtrup, {H}.},
  title     = {{M}acroscopic {T}ransport {E}quations for {R}arefied {G}as {F}lows},
  publisher = {Springer-Verlag Berlin Heidelberg},
  year      = {2005},
  chapter   = {4}
}

@book{chapman1990mathematical,
  title = {The {M}athematical Theory of {N}on-Uniform {G}ases},
  author = {Chapman, {S}. and Cowling, T.G.},
  year = {1990},
  edition = {3},
  publisher = {Cambridge University Press},
  address = {Cambridge}
}

@article{Bauerle2025,
author = {Bauerle, M. and Christlieb, A. J. and Ding, {M}. and Huang, J.},
title = {On the {R}otational {I}nvariance and {H}yperbolicity of {S}hallow {W}ater {M}oment {E}quations in {T}wo {D}imensions},
journal = {SIAM Journal on Mathematical Analysis},
volume = {57},
number = {1},
pages = {1039-1085},
year = {2025},
doi = {10.1137/23M1579789}
}

@article{Canestrelli2009,
  title = {Well-balanced high-order centred schemes for non-conservative hyperbolic systems. {A}pplications to shallow water equations with fixed and mobile bed},
  volume = {32},
  url = {http://dx.doi.org/10.1016/j.advwatres.2009.02.006},
  DOI = {10.1016/j.advwatres.2009.02.006},
  number = {6},
  journal = {Advances in Water Resources},
  publisher = {Elsevier BV},
  author = {Canestrelli,  A. and Siviglia,  A. and Dumbser,  M. and Toro,  E.F.},
  year = {2009},
  month = jun,
  pages = {834–844}
}

@article{CastroDaz2012,
  title = {A Class of Computationally Fast First Order Finite Volume Solvers: PVM Methods},
  volume = {34},
  url = {http://dx.doi.org/10.1137/100795280},
  DOI = {10.1137/100795280},
  number = {4},
  journal = {SIAM Journal on Scientific Computing},
  publisher = {Society for Industrial & Applied Mathematics (SIAM)},
  author = {Castro Díaz,  M. J. and Fernández-Nieto,  E.},
  year = {2012},
  month = jan,
  pages = {A2173–A2196}
}

@article{Liggett,
author = {J. A. Liggett },
title = {{C}ritical {D}epth, {V}elocity {P}rofiles, and {A}veraging},
journal = {Journal of Irrigation and Drainage Engineering},
volume = {119},
number = {2},
pages = {416-422},
year = {1993},
doi = {10.1061/(ASCE)0733-9437(1993)119:2(416)},
}

@inproceedings{koellermeier2025energy,
  title = {A new {E}nergy {E}quation {D}erivation for the {S}hallow {W}ater {L}inearized {M}oment {E}quations},
  author = {J. Koellermeier},
  year = {2025},
  address = {Ghent, Belgium},
  month = {September},
  organization = {Universiteit Gent},
  note = {to appear in \textit{Book of Abstracts of the 9th International Conference on Advanced Computational Methods in Engineering and Applied Mathematics (ACOMEN2025)}}
}

\end{document}